\theoremstyle{plain}
\newtheorem{theorem}{Theorem}[section]
\newtheorem{lemma}[theorem]{Lemma}
\newtheorem{proposition}[theorem]{Proposition}
\newtheorem{corollary}[theorem]{Corollary}
\numberwithin{equation}{section}
\theoremstyle{definition}
\newtheorem{example}[theorem]{Example}
\newtheorem{remark}[theorem]{Remark}
\DeclareMathOperator{\Module}{-Mod}
\DeclareMathOperator{\module}{-mod}
\DeclareMathOperator{\fdmod}{-fdmod}
\DeclareMathOperator{\Hom}{Hom}
\DeclareMathOperator{\Ext}{Ext}
\newcommand{\C}{{\mathscr{C}}}
\newcommand{\BS}{{\mathbf{S}}}
\newcommand{\BK}{{\mathbf{K}}}
\newcommand{\BD}{{\mathbf{D}}}
\newcommand{\BQ}{{\mathbf{Q}}}
\newcommand{\BR}{{\mathbf{R}}}
\newcommand{\BB}{{\mathbf{B}}}
\newcommand{\ID}{{\mathbf{Id}}}
\newcommand{\N}{{\mathbb{N}}}
\newcommand{\OI}{{\mathscr{OI}}}
\newcommand{\FI}{{\mathscr{FI}}}
\newcommand{\VI}{{\mathscr{VI}_q}}
\newcommand{\mi}{{\mathfrak{I}}}
\newcommand{\tor}{{\mathrm{tor}}}
\newcommand{\se}{{\mathrm{sec}}}
\newcommand{\loc}{{\mathrm{loc}}}
\title{Adjoint functors on the representation category of $\OI$}
\author{Wee Liang Gan}
\address{Department of Mathematics, University of California, Riverside, CA 92521, USA}
\email{wlgan@ucr.edu}
\author{Liping Li}
\address{LCSM (Ministry of Education), School of Mathematics and Statistics, Hunan Normal University, Changsha, Hunan 410081, China.}
\email{lipingli@hunnu.edu.cn}
\thanks{The second author is supported by NSF of China grant 11771135, the Hunan Provincial Science and Technology Department grant 2019RS1039, and the Hunan Provincial Education Department grant 18A016.}
\begin{document}

\begin{abstract}
In this paper we study adjunction relations between some natural functors on the representation category of the category of finite linearly ordered sets and order-preserving injections.
We also prove that the Nakayama functor induces an equivalence from the Serre quotient of the category of finitely generated modules by the category of finitely generated torsion modules to the category of finite dimensional modules.
\end{abstract}

\maketitle

\section{Introduction}

Adjoint pairs of functors have played an important role in the representation theory of the category $\FI$ of finite sets and injections. Specific examples include the \textbf{shift} functor $\BS$, the \textbf{derivative} functor $\BD$, and the \textbf{negative shift} functor $\BS^{-1}$ introduced in \cite{CEF}, and the \textbf{coinduction} functor $\BQ$ introduced in \cite{GL1}. It was proved in \cite{Gan} that $\BS^{-1}$ is simultaneously the left adjoint of $\BS$ and the right adjoint of $\BD$. It was proved in \cite{GL1} that $\BQ$ is the right adjoint of $\BS$.
Analogously, these functors have been constructed for other concrete combinatorial categories appearing in representation stability theory, such as the category $\OI$ of finite totally ordered sets and order-preserving injections (see \cite{GS} and \cite{GL4}), the category $\VI$ of finite dimensional vector spaces over a finite field $\mathbb{F}_q$ and linear injections (see \cite{Nag1} and \cite{Nag2}). In \cite{GLX}, The authors of this paper and Xi introduced the \textbf{Nakayama} functor and its right adjoint the \textbf{inverse Nakayama} functor for representations of $\FI$ and $\VI$. Moreover, representation theory of abstract combinatorial categories equipped with functors sharing similar properties as these concrete functors have been studied; see \cite{GL2} and \cite{GL3}.

The main goal of this paper is to give a systematical construction of these adjoint functors for the category $\OI$, describe their adjunction relations, and pursue possible applications in representation theory of $\OI$. We note that the combinatorial structure of $\OI$ gives us two functorial operations on the morphisms set. In more details, one can extend a morphism (which is a function) $\alpha: S \to T$ by adding a minimal (or maximal) element $\ast$ into $S$ and $T$ such that the extended morphism preserves $\ast$, and hence we get a faithful \textbf{self-embedding} functor $\boldsymbol{\iota}_{\ast}: \OI \to \OI_+$ such that $\boldsymbol{\iota}_{\ast}$ is bijective restricted on the object sets, where $\OI_+$ is the full subcategory of $\OI$ consisting of nonempty sets. The functor $\boldsymbol{\iota}_{\ast}$ induces a functor from the module category of $\OI_+$ to the module category of $\OI$ via pull-back. This pull-back functor, along with its left and right adjoints, give us a triple of adjoint functors. Dually, one can restrict the above morphism $\alpha$ by removing the minimal (or maximal) elements in $S$ and $T$ (provided that they are nonempty) to obtain a new morphism with smaller domain and codomain, and hence we get a full \textbf{self-submerging} functor $\boldsymbol{\pi}_{\ast}: \OI_+ \to \OI$ such that it is again bijective restricted on the object sets. Similarly, the pull-back of $\boldsymbol{\pi}$ (a functor from the module category of $\OI$ to the module category of $\OI_+$) and its left and right adjoint functors give another triple. Furthermore, we show that the first triple of adjoint functors can be extended to a quintuple of adjoint functors. In a summary, we obtain the following table of functors as well as a theorem describing their adjunction relations. For precise definitions and details, please refer to Sections 2 and 3.

\begin{tabular}{c|ccc}
  Functor & Definition & Description & Exactness\\
  \hline
  $\BS_{\ast}$ & restriction along $\boldsymbol{\iota}_{\ast}$ & $(\BS_{\ast} V)_n = V_{n+1}$ & exact \\
 $\BK_{\ast}$ & $\bigoplus_{n \in \N_+} \Hom_{k\C} (k\C e_n/\mi_{\ast}e_n, -)$ & $(\BK_{\ast} V)_n = (\mathrm{Ann}_{\mi_{\ast}}(V))_n$ & left exact\\
  $\boldsymbol{\Gamma}_{\ast}$ & $k\C_+ \otimes_{k\C} -$ & $(\boldsymbol{\Gamma}_{\ast} V)_n = V_0 \oplus \ldots \oplus V_{n-1}$ & exact\\
  $\BQ_{\ast}$ & $\bigoplus_{n \in \N_+}\Hom_{k\C} (k\C_+ e_n, -)$ & $(\BQ_{\ast} V)_n = V_n \oplus V_{n-1}$ & exact\\
  $\boldsymbol{\Psi}_{\ast}$ & left adjoint of $\boldsymbol{\Gamma}_{\ast}$ & $(\boldsymbol{\Psi}_{\ast} V)_n = \varinjlim ( (k\C /\mathfrak{A}) \otimes_{k\C}  \BD_{\ast} \BS_{\ast}^n V)$ & right exact\\
  $\BR_{\ast}$ & right adjoint of $\BQ_{\ast}$ & $(\BR_{\ast}V)_n = V_n \oplus (\BK_{\ast}V)_{n+1}$ & left exact\\
  $\BB_{\ast}$ & restriction along $\boldsymbol{\pi}_{\ast}$ & $(\BB_{\ast}V)_n = V_{n-1}$ & exact\\
  $\BD_{\ast}$ & $\BS_{\ast} \circ (k\C/\mi_{\ast} \otimes_{k\C} -)$ & $(\BD_{\ast}V)_n = (V/\mi_{\ast}V)_{n+1}$ & right exact
\end{tabular}

\begin{theorem} \label{main result}
Let $\C$ be the full skeletal subcategory of $\OI$ consisting of objects $[n]$, $n \in \N$. Then:
\begin{enumerate}
\item $(\boldsymbol{\Psi}_{\ast}, \, \boldsymbol{\Gamma}_{\ast}, \, \BS_{\ast}, \, \BQ_{\ast}, \, \BR_{\ast})$ is a quintuple of adjoint functors in $\C \Module$;
\item $(\BD_{\ast}, \, \BB_{\ast}, \, \BS_{\ast} \BK_{\ast})$ is a triple of adjoint functors in $\C \Module$;
\item $\BK_{\ast} \BQ_{\ast} \cong \BB_{\ast}$, and $\BD_{\ast} \boldsymbol{\Gamma} \cong \BS_{\ast} \BB_{\ast} \cong \ID_{\C \Module}$.
\end{enumerate}
\end{theorem}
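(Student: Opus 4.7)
The plan is to handle the three statements essentially independently. For (1), two of the four adjunctions come for free: the pair $\boldsymbol{\Psi}_{\ast} \dashv \boldsymbol{\Gamma}_{\ast}$ and the pair $\BQ_{\ast} \dashv \BR_{\ast}$ hold by definition of $\boldsymbol{\Psi}_{\ast}$ and $\BR_{\ast}$, once these adjoints have been shown to exist in the earlier sections. The real work is the two middle adjunctions $\boldsymbol{\Gamma}_{\ast} \dashv \BS_{\ast}$ and $\BS_{\ast} \dashv \BQ_{\ast}$. Since $\BS_{\ast}$ is the pullback along $\boldsymbol{\iota}_{\ast} \colon \C \to \C_+$, the formalism of Kan extensions along a functor between essentially small categories guarantees such left and right adjoints exist abstractly, and one then only has to match explicit formulas. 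I would identify $\boldsymbol{\Gamma}_{\ast} = k\C_+ \otimes_{k\C} -$ with the left Kan extension along $\boldsymbol{\iota}_{\ast}$ (viewing $k\C_+$ as a $(k\C_+, k\C)$-bimodule via $\boldsymbol{\iota}_{\ast}$), and $\BQ_{\ast} V = \bigoplus_{n \in \N_+}\Hom_{k\C}(k\C_+ e_n, V)$ with the right Kan extension. The evaluations in the table, such as $(\boldsymbol{\Gamma}_{\ast}V)_n = V_0 \oplus \cdots \oplus V_{n-1}$ and $(\BQ_{\ast}V)_n = V_n \oplus V_{n-1}$, are then verified by computing on the standard projective generators $k\C e_n$ and using that morphisms in $\OI$ are order-preserving injections.

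For (2), the strategy is parallel. Here $\BB_{\ast}$ is the pullback along $\boldsymbol{\pi}_{\ast} \colon \C_+ \to \C$, so Kan extensions again furnish canonical left and right adjoints. I would check that $\BD_{\ast} = \BS_{\ast} \circ (k\C/\mi_{\ast} \otimes_{k\C} -)$ coincides with the left Kan extension by constructing a natural isomorphism $\Hom_{\C_+}(\BD_{\ast}V, W) \cong \Hom_{\C}(V, \BB_{\ast}W)$. Because $\boldsymbol{\pi}_{\ast}$ strips off the minimal element, the vanishing condition imposed by the ideal $\mi_{\ast}$ exactly corresponds to the absence of that element, so quotienting by $\mi_{\ast}$ and then shifting conspire to reproduce restriction along $\boldsymbol{\pi}_{\ast}$. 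Dually, $\BK_{\ast}$ is a Hom out of the quotient $k\C e_n / \mi_{\ast} e_n$, which detects annihilators by $\mi_{\ast}$, and I would use this to realize $\BS_{\ast}\BK_{\ast}$ as the right Kan extension along $\boldsymbol{\pi}_{\ast}$.

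The three isomorphisms in (3) should then be quick consequences of the explicit formulas. For $\BK_{\ast}\BQ_{\ast} \cong \BB_{\ast}$, I would compute $(\BK_{\ast}\BQ_{\ast}V)_n = \mathrm{Ann}_{\mi_{\ast}}(\BQ_{\ast}V)_n$ and, using the decomposition $(\BQ_{\ast}V)_n = V_n \oplus V_{n-1}$, show that the $\mi_{\ast}$-annihilator picks out precisely the $V_{n-1}$ summand, which is $(\BB_{\ast}V)_n$. The identifications $\BD_{\ast}\boldsymbol{\Gamma}_{\ast} \cong \BS_{\ast}\BB_{\ast} \cong \ID_{\C\Module}$ reduce to $(\BS_{\ast}\BB_{\ast}V)_n = (\BB_{\ast}V)_{n+1} = V_n$ on one hand, and on the other to observing that $((\boldsymbol{\Gamma}_{\ast}V)/\mi_{\ast}(\boldsymbol{\Gamma}_{\ast}V))_{n+1}$ telescopes: quotienting $V_0 \oplus \cdots \oplus V_n$ by the image of the lower stages under the $\mi_{\ast}$-action leaves exactly $V_n$. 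The hard part will be the construction and verification of $\boldsymbol{\Psi}_{\ast}$ itself; showing that the colimit formula $(\boldsymbol{\Psi}_{\ast}V)_n = \varinjlim((k\C/\mathfrak{A}) \otimes_{k\C} \BD_{\ast}\BS_{\ast}^n V)$ really yields a left adjoint to $\boldsymbol{\Gamma}_{\ast}$ requires careful bookkeeping of how the $\OI$-action permutes the iterated derivatives and how the ideal $\mathfrak{A}$ regulates the colimit system. The remaining parts of the theorem are, by contrast, largely formal Kan-extension manipulations once the correct identifications are in place.
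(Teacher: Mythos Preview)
Your proposal is correct and follows essentially the same approach as the paper: both establish the adjunctions by identifying the functors with Kan extensions along $\boldsymbol{\iota}_{\ast}$ and $\boldsymbol{\pi}_{\ast}$ (equivalently, via the tensor--hom formalism over the category algebra and the paper's $F^\dag$ construction of Section~\ref{subsection:right adjoint}), and both correctly identify the verification of the explicit colimit formula for $\boldsymbol{\Psi}_{\ast}$ as the main technical work. One minor tactical difference worth noting: for $\BD_{\ast}\boldsymbol{\Gamma}_{\ast} \cong \ID$ the paper argues via uniqueness of adjoints (since $\BS_{\ast}\BB_{\ast} \cong \ID$ follows from $\boldsymbol{\pi}_{\ast}\boldsymbol{\iota}_{\ast} \cong \ID_{\C}$, its left adjoint $\BD_{\ast}\boldsymbol{\Gamma}_{\ast}$ must also be $\ID$), which is slicker than your direct telescoping computation, though both are valid.
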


We shall point out that some functors have appeared in literature \cite{GL4, GS}: the shift functor $\BS_{\ast}$, its right adjoint $\BQ_{\ast}$, and the functors $\BK_{\ast}$ and $\BD_{\ast}$ obtained by taking the kernel and cokernel of the natural transformation $\ID_{\C \Module} \to \BS_{\ast}$. However, the interpolations of these functors in terms of the category algebra $k\C$ and its ideals, and many relations among these functors listed in the above theorem, are new as far as we know.

The representation category $\OI \module$ of finitely generated $\OI$-modules is abelian when $k$ is Noetherian (see for instances \cite[Theorem 1.5]{GL2} or \cite[Theorem 1.1.3]{SS}), and it has a natural torsion pair: the full subcategory $\OI \module^{\tor}$ of torsion modules and the full subcategory of torsion free modules. Since the former is an abelian subcategory, one can consider the Serre quotient category $\OI \module / \OI \module^{\tor}$. In \cite{GLX}, the authors showed that the inverse functor $\boldsymbol{\nu}$ and the quasi-inverse Nakayama functor $\boldsymbol{\nu}^{-1}$ form an adjoint pair, and induce the following equivalences
\begin{align*}
\FI \module / \FI \module^{\tor} & \cong \FI \module^{\tor},\\
\VI \module / \VI \module^{\tor} & \cong \VI \module^{\tor},
\end{align*}
when $k$ is a field of characteristic 0. Since the required conditions for the pair $(\boldsymbol{\nu}, \, \boldsymbol{\nu}^{-1})$ described in \cite{GLX} still hold, one can also define these functors in $\OI \module$, and prove the following result:

\begin{theorem}
Let $k$ be a field. Then the adjoint pair $(\boldsymbol{\nu}, \, \boldsymbol{\nu}^{-1})$ induces an equivalence
\[
\OI \module / \OI \module^{\tor} \cong \OI \fdmod,
\]
where $\OI \fdmod$ is the category of finite dimensional $\OI$-modules.
\end{theorem}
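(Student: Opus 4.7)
My plan is to follow the proof template established in \cite{GLX} for the analogous $\FI$ and $\VI$ equivalences, while paying close attention to the fact that the target category here is $\OI\fdmod$ (degree-wise finite dimensional $\OI$-modules) rather than the torsion subcategory as in the $\FI$ and $\VI$ cases. The paper already asserts that the axioms required in \cite{GLX} to construct $\boldsymbol{\nu}$ and its right adjoint $\boldsymbol{\nu}^{-1}$ are satisfied by $\OI$, so the adjoint pair $(\boldsymbol{\nu},\,\boldsymbol{\nu}^{-1})$ on $\OI\Module$ is given; the task is to extract the equivalence on the Serre quotient.

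First I would show that $\boldsymbol{\nu}$ carries $\OI\module$ into $\OI\fdmod$ and annihilates $\OI\module^{\tor}$. By right exactness of $\boldsymbol{\nu}$ and finite presentation, the first claim reduces to computing $\boldsymbol{\nu}$ on a standard projective $k\C e_n$, where the formula from \cite{GLX} identifies the $m$-th graded piece with the $k$-dual of $\Hom_{\OI}([m],[n])$, a space of dimension $\binom{n}{m}$ and so manifestly finite in each degree. For the vanishing, every finitely generated torsion $\OI$-module is supported in bounded degree, and a direct degree-wise inspection (using the fact that the projectives computing $\boldsymbol{\nu}$ are concentrated in high degree) shows the image is zero. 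By the universal property of Serre quotients this yields an induced exact functor
\[
\bar{\boldsymbol{\nu}}\colon \OI\module/\OI\module^{\tor}\longrightarrow \OI\fdmod.
\]

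To prove $\bar{\boldsymbol{\nu}}$ is an equivalence I would use $\boldsymbol{\nu}^{-1}$ as the candidate quasi-inverse. The verifications are: (a) $\boldsymbol{\nu}^{-1}$ carries $\OI\fdmod$ into $\OI\module$; (b) the counit $\boldsymbol{\nu}\boldsymbol{\nu}^{-1}(W)\to W$ is an isomorphism for every $W\in\OI\fdmod$; and (c) the unit $V\to\boldsymbol{\nu}^{-1}\boldsymbol{\nu}(V)$ has kernel and cokernel lying in $\OI\module^{\tor}$ for every $V\in\OI\module$. Items (a) and (b) can be attacked by resolving $W$ by the images under $\boldsymbol{\nu}$ of standard projectives and using left exactness of $\boldsymbol{\nu}^{-1}$.

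The hardest step will be item (c). Via a finite presentation $P_1\to P_0\to V\to 0$ and the five lemma, this reduces to verifying the unit is an isomorphism modulo $\OI\module^{\tor}$ when $V=k\C e_n$. Here one has to identify the concrete comparison map with the identity up to a bounded-degree error term; the calculation should be more tractable than in the $\FI$ or $\VI$ setting, because the stabilizers in $\OI$ are trivial, so only a filtration by degree needs to be tracked rather than the representation theory of symmetric or general linear groups. Once this direct computation is carried out on each $k\C e_n$, the five-lemma reduction together with the universal property of Serre quotients combine to deliver the claimed equivalence.
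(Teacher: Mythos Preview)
Your outline follows the general template of \cite{GLX}, but two substantive gaps remain, both tied to the feature that distinguishes $\OI$ from $\FI$ and $\VI$: finitely generated torsion $\OI$-modules need \emph{not} have bounded support. For instance $M(1)/\mi_b M(1)$ is cyclic, torsion, and one-dimensional in every positive degree. So your justification that $\boldsymbol{\nu}$ annihilates torsion modules ``because they are supported in bounded degree'' is incorrect; the correct (and easier) reason is simply that each $M(n)$ is torsion free, whence $\Hom_{k\C}(T, M(n))=0$ for every torsion $T$.

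The more serious gap is in step (c). Your five-lemma reduction does not go through: applying $\boldsymbol{\nu}^{-1}\boldsymbol{\nu}$ to a presentation $P_1\to P_0\to V\to 0$ produces a bottom row which is only a complex, since $\boldsymbol{\nu}^{-1}$ is merely left exact; even granting that the unit is an isomorphism on each $P_i$, this yields injectivity of $\eta_V$ but no control over its cokernel. What is actually required is the inclusion $\ker\boldsymbol{\nu}\subseteq\C\module^{\tor}$, and here the paper invokes a fact you never mention: for $\OI$ the finitely generated torsion-free injectives coincide with the finitely generated projectives (this is drawn from \cite{GS}). That fact simultaneously makes $\boldsymbol{\nu}$ \emph{exact} (not merely right exact, as you assume) and lets one embed any nonzero torsion-free module into a projective, so that $\boldsymbol{\nu}$ does not kill it. With exactness and $\boldsymbol{\nu}\boldsymbol{\nu}^{-1}\cong\ID$ in hand, the paper then quotes \cite[Proposition~III.2.5]{Gab} to obtain $\C\module/\ker\boldsymbol{\nu}\simeq\C\fdmod$ and finishes by identifying the kernel. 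Your unit/counit route could in principle be completed, but it must ultimately pass through this same kernel identification, so it does not bypass the paper's key step.
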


We remark that for $\FI$ and $\VI$, the category of finitely generated torsion modules coincides with the category of finite dimensional modules. But for $\OI$, they are different.

This theorem has quite a few interesting consequences. For instances, we deduce that objects in the Serre quotient category are of finite length and of finite injective dimension. Moreover, since the \textbf{section} functor $\OI \module /\OI \module^{\tor} \to \OI \module$ and the \textbf{localization} functor $\OI \module \to \OI \module /\OI \module^{\tor}$ induce an equivalence between the category of \textbf{saturated} $\OI$-modules (equipped with an abelian structure not inherited from that of $\OI \module$) and $\OI \module / \OI \module^{\tor}$, one deduces that finitely generated saturated modules are of finite length and of finite injective dimension, and can give an explicit description of simple saturated modules; see Theorem \ref{simple saturated}.

The paper is organized as follows. In Section 2 we include some preliminary results such as definitions, terminologies, and elementary facts used throughout this paper. In Section 3 we explicitly construct functors listed in the above table, and prove the first theorem. Nakayama functors and their applications are described in the last section.

\section{Preliminaries}

From now on we let $\C$ be the skeletal category of $\OI$, whose objects are $[n]$ with the natural ordering, $n \in \mathbb{N}$, and whose morphisms are strictly increasing functions. By convention, we let $[0] = \emptyset$ and assume that there is a unique morphism $[0] \to [n]$ for any $n \in \N$. Denote by $\C_+$ the full subcategory consisting of non-empty sets. For every morphism $\alpha: [m] \to [n]$, we define its \textbf{degree} $\deg(\alpha)$ to be $n-m$. It is easy to check that $\C$ is a \textbf{graded} category in the sense that every non-invertible morphism can be written as a composite of finitely many morphisms of degree 1 (called \textbf{irreducible} morphisms). Furthermore, the above mentioned decompositions satisfy a quadratic relation. For details, please refer to \cite[Section 2]{GS}.

Let $k$ be a commutative ring. A \textbf{representation} of $\C$ (or a \textbf{$\C$-module}) is a covariant functor from $\C$ to $k \Module$, the category of $k$-modules. Morphisms between $\C$-modules are natural transformations. The category $\C \Module$ is abelian, and has enough projective objects. In particular, free $\C$-modules $k\C([n], -)$, the linearizations of representable functors, are projective, and we denote them by $M(n)$ for $n \in \N$. Given a $\C$-module $V$, we denote its value on the object $[n]$ by $V_n$.

\subsection{Right adjoint functors} \label{subsection:right adjoint}

We recall here a standard construction of right adjoint functors; see, for example, \cite[Definition 1 and Proposition 2]{Gan}.

Suppose that $F:\C\Module \to \C\Module$ is a right exact functor which transforms direct sums to direct sums. Then it has a right adjoint functor $F^\dag$ where for each $\C$-module $V$ and for each $n\in \N$, we define
\[ (F^\dag(V))_n = \Hom_{\C\Module} ( F(M(n)), V ).  \]
For any morphism $\alpha: [m]\to [n]$ in $\C$, we define the induced map $(F^\dag(V))_m \to (F^\dag(V))_n$ to be the map
\[ \Hom_{\C\Module} ( F(M(m)), V ) \to \Hom_{\C\Module} ( F(M(n)), V ),
\qquad \phi \mapsto \phi\circ F(\rho_\alpha),\]
where $\rho_\alpha$ is the $\C$-module homomorphism $M(n) \to M(m)$ defined by sending each morphism $\beta \in M(n)_r$ (for any $r\in\N$) to $\beta\circ \alpha \in M(m)_r$.

\subsection{Self-embedding functors}

Let $\alpha: [m] \to [n]$ be a morphism in $\C$. Then one can define a new morphism $\tilde{\alpha}: [m+1] \to [n+1]$ such that
\[
\tilde{\alpha}(i) = \begin{cases}
1, & i=1;\\
\alpha(i-1) + 1, & i \in [m+1]\setminus \{1\}
\end{cases}
\]
In particular, the unique morphism $[0] \to [1]$ is sent to the canonical inclusion $[1] \to [2]$. It is easy to check that the above construction defines a functor $\boldsymbol{\iota}_a: \C \to \C_+$ such that $\boldsymbol{\iota}_a ([n]) = [n+1]$ and $\boldsymbol{\iota}_a(\alpha) = \tilde{\alpha}$. Furthermore, the functor $\boldsymbol{\iota}_a$ is faithful, and there is a natural transformation $\boldsymbol{\rho}^a: \ID_{\C} \to \boldsymbol{\iota}_a$ where $\boldsymbol{\rho}^a_n: [n] \to [n+1]$ is the map sending $i \in [n]$ to $i+1 \in [n+1]$ (we let $\boldsymbol{\rho}^a_0$ be the unique map $[0] \to [1]$) and making the following diagram commute:
\[
\xymatrix{
[m] \ar[r]^{\alpha} \ar[d]^{\boldsymbol{\rho}_m^a} & [n] \ar[d]^{\boldsymbol{\rho}_n^a}\\
[m+1] \ar[r]^{\tilde{\alpha}} & [n+1].
}
\]

Intuitively, $\boldsymbol{\iota}_a$ comes from the following natural construction: we add another element $-\infty$ to each object in $\C$ to obtain a new category $\tilde{\C}$ isomorphic to $\C_+$, and convert every morphism $\alpha: [m] \to [n]$ in $\C$ to a morphism $\tilde{\alpha}: [m] \cup \{-\infty\} \to [n] \cup \{-\infty\}$ in $\tilde{\C}$ such that $\tilde{\alpha}(-\infty) = -\infty$ and $\tilde{\alpha}(i) = \alpha(i)$ for $i \in [m]$. Now we identify $\tilde{\C}$ with $\C_+$ to obtain the definitions of $\boldsymbol{\iota}_a$ and $\boldsymbol{\rho}^a$.

There is a dual construction by adding a maximal element $\infty$ to each object in $\C$. In this way we obtain another self-embedding functor $\boldsymbol{\iota}_b: \C \to \C_+$ such that $\boldsymbol{\iota}_b ([n]) = [n+1]$ and $\boldsymbol{\iota}_b(\alpha) = \bar{\alpha}$ with $\bar{\alpha}(i) = \alpha(i)$ for $i \in [m]$. In particular, the unique morphism $[0] \to [1]$ is mapped to the function $+1: [1] \to [2]$. The functor $\boldsymbol{\iota}_b$ is also faithful, and its corresponded natural transformation $\boldsymbol{\rho}^b: \ID_{\C} \to \boldsymbol{\iota}_a$ is a sequence of maps $\boldsymbol{\rho}^b_n: [n] \to [n+1]$ sending $i \in [n]$ to $i \in [n+1]$ (we let $\boldsymbol{\rho}^b_0$ be the unique map $[0] \to [1]$).

The self-embedding functors $\boldsymbol{\iota}_a$ and $\boldsymbol{\iota}_b$ induce two shift functors in the representation category. We will introduce the category algebra, and use the classical tensor and hom construction to define their adjoint functors.

\subsection{Self-submerging functors}

Now we construct a functor $\boldsymbol{\pi}_a: \C_+ \to \C$ as follows. For objects, one has $\boldsymbol{\pi}_a([n]) = [n-1]$; for a morphism $\alpha: [m] \to [n]$, one lets $\boldsymbol{\pi}_a(\alpha)$ be the composite of the following maps:
\[
\xymatrix{
[m-1] \ar[r]^-{\sim} & \{2, \, 3, \, \ldots, \, m\} \ar[r]^-{\alpha_{\mathrm{res}}} & \{2, \, 3, \, \ldots, \, n \} \ar[r]^-{\sim} & [n-1]
}
\]
where $\alpha_{\mathrm{res}}$ is the restriction of $\alpha$ on the subset, and the two isomorphisms are obtained by adding 1 and subtracting 1 respectively. This is well defined since the image of $\{2, \, \ldots, \, m\}$ under $\alpha$ is contained in $\{2, \, \ldots, \, n\}$. Intuitively, we obtain $\boldsymbol{\pi}_a$ by removing the minimal element 1 from each object in $\C_+$. The sequence $\boldsymbol{\rho}^a_n$, $n \geqslant 1$, forms a natural transformation $\boldsymbol{\pi}_a \to \ID_{\C_+}$.

Dually, one can remove the maximal element from each object in $\C_+$ to get another self-submerging functor $\boldsymbol{\pi}_b: \C_+ \to \C$. Explicitly, $\boldsymbol{\pi}_b([n]) = [n-1]$, and for $\alpha: [m] \to [n]$, one lets $\boldsymbol{\pi}_b(\alpha)$ be the restriction of $\alpha$. In this case, the sequence of canonical inclusions $\boldsymbol{\rho}_n^b$, $n \geqslant 1$, forms a natural transformation $\boldsymbol{\pi}_b \to \ID_{\C_+}$.

The self-submerging functors are full, and induce two functors in $\C \Module$, called \textbf{negative shift} functors. Accordingly, their left and right adjoint functors exist, and will be studied in next section.

The following lemma collects some elementary facts about these functors.

\begin{lemma} \label{embedding and submerging}
Notation as above. Then:
\begin{enumerate}
\item $\boldsymbol{\pi}_a \boldsymbol{\iota}_a \cong \ID_{\C} \cong \boldsymbol{\pi}_b \boldsymbol{\iota}_b$;
\item $\boldsymbol{\iota}_a \boldsymbol{\iota}_b (\alpha) = \boldsymbol{\iota}_b \boldsymbol{\iota}_a (\alpha)$ for every morphism $\alpha$ in $\C$;
\item $\boldsymbol{\pi}_a \boldsymbol{\pi}_b (\alpha) = \boldsymbol{\pi}_b \boldsymbol{\pi}_a (\alpha)$ for every morphism $\alpha: [m] \to [n]$ in $\C_+$ with $m \geqslant 2$;
\item for any $\alpha: [m] \to [n]$ in $\C_+$, $(\boldsymbol{\iota}_a \boldsymbol{\pi}_b)^m (\alpha)$ is the canonical inclusion from $[m]$ to $[n]$.
\end{enumerate}
\end{lemma}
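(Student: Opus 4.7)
My plan is to verify each of the four assertions by direct computation from the explicit formulas for $\boldsymbol{\iota}_a, \boldsymbol{\iota}_b, \boldsymbol{\pi}_a, \boldsymbol{\pi}_b$ given in the preceding paragraphs. On objects every composition is evidently correct, so the entire content lies in tracking what happens to a general morphism $\alpha:[m]\to[n]$; the isomorphisms in (1) will in fact be equalities of functors, and (2), (3), (4) are all statements about individual morphisms.

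For (1), I would apply $\boldsymbol{\pi}_a$ to $\tilde\alpha=\boldsymbol{\iota}_a(\alpha)$: since $\tilde\alpha(1)=1$ and $\tilde\alpha(i)=\alpha(i-1)+1$ for $i\ge 2$, the definition of $\boldsymbol{\pi}_a$ (restrict to $\{2,\dots,m+1\}$ and reindex by subtracting one) recovers $\alpha(j)$ on each $j\in[m]$. The dual calculation with $\boldsymbol{\pi}_b$ and $\boldsymbol{\iota}_b$ is even simpler, since $\bar\alpha$ just appends the pair $(m+1,n+1)$ and $\boldsymbol{\pi}_b$ discards it. For (2), I would compute both $\boldsymbol{\iota}_a\boldsymbol{\iota}_b(\alpha)$ and $\boldsymbol{\iota}_b\boldsymbol{\iota}_a(\alpha)$ as maps $[m+2]\to[n+2]$ and check they agree value-by-value: each sends $1\mapsto 1$, each sends $m+2\mapsto n+2$, and each sends an intermediate $i$ to $\alpha(i-1)+1$. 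The order of "add minimum" and "add maximum" does not matter because these operations act on disjoint slots.

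For (3), I would similarly compute both $\boldsymbol{\pi}_a\boldsymbol{\pi}_b(\alpha)$ and $\boldsymbol{\pi}_b\boldsymbol{\pi}_a(\alpha)$ as maps $[m-2]\to[n-2]$ for $m\ge 2$; the monotonicity of $\alpha$ forces $\alpha(m-1)\le n-1$ and $\alpha(2)\ge 2$, so both restrictions are defined, and both compositions send $i$ to $\alpha(i+1)-1$ for $i\in[m-2]$. For (4), I would proceed by induction on $k$, proving the invariant that $(\boldsymbol{\iota}_a\boldsymbol{\pi}_b)^k(\alpha)$ is a well-defined morphism $[m]\to[n]$ satisfying $(\boldsymbol{\iota}_a\boldsymbol{\pi}_b)^k(\alpha)(i)=i$ for $1\le i\le\min(k,m)$ and $(\boldsymbol{\iota}_a\boldsymbol{\pi}_b)^k(\alpha)(i)=\alpha(i-k)+k$ for $k< i\le m$. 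The inductive step is immediate from the formulas, and well-definedness uses the elementary bound $\alpha(j)\le n-m+j$. Setting $k=m$ yields the canonical inclusion.

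The whole argument is a sequence of unpacking calculations, so there is no real mathematical obstacle; the only place where care is required is the indexing in (4), specifically verifying that the shifted values $\alpha(i-k)+k$ still lie in $[n]$ so that each intermediate stage of the iteration produces an honest $\C_+$-morphism.
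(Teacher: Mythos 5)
Your proposal is correct and matches the paper's (implicit) approach: the paper disposes of this lemma with ``A direct check by definitions,'' and your value-by-value computations and the inductive invariant for (4), including the bound $\alpha(j)\leq n-m+j$ ensuring well-definedness of each iterate, are exactly what that check amounts to.
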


\begin{proof}
A direct check by definitions.
\end{proof}

\subsection{Category algebra}

In this subsection we define the category algebra $k\C$ so that we can interpret the adjoint triples in $\C \Module$ induced by the self-embedding functors and self-submerging functors by the classical tensor-hom adjunction. The category algebra $k\C$ as a free $k$-module is spanned by all morphisms in $\C$, and the multiplication is determined by the product of two basis elements via the following rule:
\[
\alpha \cdot \beta =
\begin{cases}
\alpha \circ \beta, & \text{if they can be composed;}\\
0, & \text{else.}
\end{cases}
\]
In this way we get an associative (non-unital) $k$-algebra. A $\C$-module $V$ gives a $k\C$-module $\bigoplus_{n \in \N} V_n$. Conversely, a $k\C$-module $V$ is a $\C$-module if and only if $V = \bigoplus_{n \in \N} e_n V$, where $e_n: [n] \to [n]$ is the identity morphism. In other words, we may identify $\C \Module$ as a full subcategory of $k\C \Module$. Basic knowledge about category algebras is described in \cite{We} and \cite{Xu}.

Let $\mi_a$ be the $k$-submodule of $k\C_+$ spanned by morphisms $\alpha: [m] \to [n]$ such that $\alpha(1) \neq 1$, $m, n \in \N$. Dually, let $\mi_b$ be the $k$-submodule of $k\C_+$ spanned by morphisms $\alpha: [m] \to [n]$ such that $\alpha(m) \neq n$. For $n \geqslant 1$ and $i \in [n+1]$, by $\alpha_{n, i}$ we mean the morphism $[n] \to [n+1]$ such that $i$ is not contained in its image. For example, if $n \geqslant 1$, then $\alpha_{n, n+1} = \boldsymbol{\rho}^b_n$ is the canonical inclusion $[n] \to [n+1]$, and $\alpha_{n, 1} = \boldsymbol{\rho}^a_n$.

\begin{lemma}
Notation as above. Then $\mi_a$ and $\mi_b$ are two-sided ideals of $k\C_+$ generated by $\{ \alpha_{n, 1} \mid n \geqslant 1\}$ and $\{ \alpha_{n, n+1} \mid n \geqslant 1\}$ respectively.
\end{lemma}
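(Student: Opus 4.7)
The plan is to prove each claim by (i) verifying directly that $\mi_a$ (resp.\ $\mi_b$) is closed under left and right multiplication by morphisms of $\C_+$, and (ii) realising it as the two-sided ideal generated by the asserted set of irreducible morphisms. The argument for $\mi_b$ will be entirely dual to that for $\mi_a$, so I focus on the latter.

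For the ideal property, I would suppose $\alpha: [m] \to [n]$ satisfies $\alpha(1) \geqslant 2$. For any composable $\beta: [n] \to [p]$, strict monotonicity of $\beta$ gives $(\beta\alpha)(1) = \beta(\alpha(1)) \geqslant \beta(2) \geqslant 2$; similarly, for any composable $\gamma: [p] \to [m]$ we have $\gamma(1) \geqslant 1$, hence $(\alpha\gamma)(1) = \alpha(\gamma(1)) \geqslant \alpha(1) \geqslant 2$. Both composites therefore lie in $\mi_a$ (noncomposable products are zero and are automatically in $\mi_a$). Moreover $\alpha_{n,1}$ sends $i \mapsto i+1$, so $\alpha_{n,1}(1) = 2 \neq 1$ and $\alpha_{n,1} \in \mi_a$; consequently the two-sided ideal $\mathfrak{J}$ of $k\C_+$ generated by $\{\alpha_{n,1}\}_{n\geqslant 1}$ is contained in $\mi_a$.

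The main step is the reverse inclusion, which hinges on an explicit factorization. Given a basis element $\alpha: [m] \to [n]$ of $\mi_a$ (so $\alpha(1) \geqslant 2$), I would define $\beta: [m+1] \to [n]$ by $\beta(1) = 1$ and $\beta(j) = \alpha(j-1)$ for $j \geqslant 2$. The inequality $1 < \alpha(1)$ together with strict monotonicity of $\alpha$ makes $\beta$ a morphism of $\C_+$, and since $\alpha_{m,1}(i) = i+1$, I obtain $\alpha = \beta \circ \alpha_{m,1} \in \mathfrak{J}$; extending linearly over basis elements yields $\mi_a \subseteq \mathfrak{J}$. The dual case for $\mi_b$ replaces the first coordinate by the last coordinate throughout and uses the factorization $\alpha = \beta' \circ \alpha_{m,m+1}$, where $\alpha_{m,m+1}$ is the canonical inclusion and $\beta': [m+1] \to [n]$ extends $\alpha$ by $\beta'(m+1) = n$; strict monotonicity of $\beta'$ follows from $\alpha(m) < n$. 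None of the steps presents genuine difficulty; the only thing to notice is the correct way to enlarge the domain by one element in order to absorb the "missing endpoint" witness into an auxiliary factor.
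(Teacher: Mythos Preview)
Your proof is correct. The approach differs slightly from the paper's: for the reverse inclusion the paper invokes the graded structure of $\C_+$, writing a morphism $\alpha$ with $\alpha(1)\neq 1$ as a composite of irreducible (degree~$1$) morphisms and observing that at least one factor must be some $\alpha_{n,1}$. You instead produce a single explicit factorisation $\alpha=\beta\circ\alpha_{m,1}$ by enlarging the domain and prepending the value~$1$. Your route is a little more elementary (it does not rely on decomposition into irreducibles) and in fact establishes the stronger statement that $\mi_a$ is already generated as a \emph{left} ideal by $\{\alpha_{m,1}\}$, which the paper records separately in the remark following the lemma with a reference to \cite{GS}.
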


\begin{proof}
We prove the conclusion for $\mi_a$. Note that the set of morphisms $\alpha: [n] \to [n+1]$ with $n \geqslant 1$ and $\alpha(1) \neq 1$ form a two-sided ideal of the category $\C_+$, so $\mi_a$ is a two-sided ideal of $k\C_+$. Furthermore, every non-invertible morphism in $\C_+$ can be written as a finite composite of irreducible morphisms, and since $\alpha(1) \neq 1$, at least one irreducible morphism must be of the form $\alpha_{n, 1}$. Thus $\mi_a$ is generated by the set $\{ \alpha_{n, 1} \mid n \geqslant 1\}$.
\end{proof}

\begin{remark} \normalfont
Actually, by \cite[Section 2 and Subsection 3.5]{GS}, the left ideal (or right ideal) of $k\C_+$ generated by $\{ \alpha_{n, 1} \mid n \geqslant 1\}$ is also $\mi_a$. The same conclusion holds for $\mi_b$ as well. More explicitly, for $\ast \in \{a, \, b\}$,
\[
\mi_{\ast} = \bigoplus_{i \geqslant 1} k\C \boldsymbol{\rho}^{\ast}_i = \bigoplus_{i \geqslant 1} \boldsymbol{\rho}_i^{\ast} (k\C).
\]
\end{remark}

\section{The adjoint quintuple and triple}

In this section we construct all functors in the table, and describe adjunction relations among them.

\subsection{The shift functors $\BS_{\ast}$ and derivatives}

The shift functor $\BS_a: \C_+ \Module \to \C \Module$ is defined to be the pull back of $\boldsymbol{\iota}_a: \C \to \C_+$. Explicitly, one has $\BS_a V = V \circ \boldsymbol{\iota}_a$. The natural transformation $\boldsymbol{\rho}^a: \ID_{\C} \to \boldsymbol{\iota}_a$ gives a natural transformation $(\boldsymbol{\rho}^a)^{\ast}: \ID_{\C \Module} \to \BS_a$, and hence we can define the functors $\BK_a$ and $\BD_a$, which are the kernel and cokernel of $(\boldsymbol{\rho}^a)^{\ast}$ respectively. The functor $\BD_a$ is called the derivative functor of $\BS_a$. Similarly, one can define functors $\BS_b$, $\BK_b$, and $\BD_b$, using the self-embedding functor $\boldsymbol{\iota}_b$ and the natural transformation $\boldsymbol{\rho}^b$.
From Lemma \ref{embedding and submerging}, we see that $\BS_a \circ \BS_b \cong \BS_b \circ \BS_a$.

Properties of shift functors and derivative functors have been extensively studied; see \cite{GL2}. In this paper we are mainly interested in illustrating these functors by classical tensor or hom functors. To this end, we observe that $\boldsymbol{\iota}_a$ induces an injective algebra homomorphism $\underline{\boldsymbol{\iota}}_a: k\C \to k\C_+$, and $\BS_a$ is the restriction along $\underline{\boldsymbol{\iota}}_a$. With this simple observation, we have:

\begin{proposition}
Let $V$ be a $\C_+$-module, and let $\ast \in \{a, \, b\}$. Then:
\begin{enumerate}
\item $\BS_{\ast} \mi_{\ast} e_n \cong M(n)$ for $n \geqslant 1$.
\item In the exact sequence $0 \to \BK_{\ast} V \to V \to \BS_{\ast} V \to \BD_{\ast} V \to 0$, one has
\begin{align*}
\BK_{\ast}V & = \mathrm{Ann}_{{\mi}_{\ast}}(V) = \bigoplus_{n \in \N} \{v \in V_n \mid \mi_{\ast} \cdot v = 0 \} \cong \bigoplus_{n \in \N} \Hom_{k\C} (k\C e_n / \mi_{\ast} e_n, V),\\
\BD_{\ast} V & = \BS_{\ast} V/\BS_{\ast} \mi_{\ast} V \cong \BS_{\ast} (V/\mi_{\ast} V) \cong \BS_{\ast} (k\C /\mi_{\ast} \otimes_{k\C} V).
\end{align*}
\end{enumerate}
\end{proposition}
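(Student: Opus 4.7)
The plan is to read everything off from the explicit description $\mi_{\ast} = \bigoplus_{i \geqslant 1} k\C\boldsymbol{\rho}_i^{\ast}$ recorded in the remark, combined with the naturality identity $\boldsymbol{\iota}_{\ast}(\gamma) \circ \boldsymbol{\rho}_m^{\ast} = \boldsymbol{\rho}_{m'}^{\ast} \circ \gamma$ for $\gamma : [m] \to [m']$ in $\C$. For (1), observe that $(\BS_{\ast}\mi_{\ast}e_n)_r = (\mi_{\ast}e_n)_{r+1}$ is spanned by morphisms $\alpha : [n] \to [r+1]$ in $\mi_{\ast}$, and any such $\alpha$ misses the distinguished element of $[r+1]$ (namely $1$ if $\ast=a$, or $r+1$ if $\ast=b$), so factors uniquely as $\alpha = \boldsymbol{\rho}_r^{\ast} \circ \beta$ for a unique $\beta : [n] \to [r]$ in $\C$. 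The assignment $\beta \mapsto \boldsymbol{\rho}_r^{\ast}\beta$ then yields a $k$-linear bijection $M(n)_r \to (\BS_{\ast}\mi_{\ast}e_n)_r$, and the naturality identity shows it is $\C$-linear.

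For the kernel portion of (2), the natural transformation $\ID \to \BS_{\ast}$ induced by $\boldsymbol{\rho}^{\ast}$ is, at level $n$, the map $V_n \to V_{n+1}$ given by $v \mapsto \boldsymbol{\rho}_n^{\ast} v$, so $(\BK_{\ast}V)_n = \{v \in V_n : \boldsymbol{\rho}_n^{\ast} v = 0\}$. Since $\boldsymbol{\rho}_i^{\ast} v = 0$ for degree reasons whenever $i \neq n$, for $v \in V_n$ the condition $\mi_{\ast} v = 0$ collapses to $\boldsymbol{\rho}_n^{\ast} v = 0$, giving $\BK_{\ast}V = \mathrm{Ann}_{\mi_{\ast}}(V)$. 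The Hom description is then a standard universal-property argument: the cyclic $\C$-module $k\C e_n/\mi_{\ast}e_n$ is generated by the image of $e_n$ subject to the relation $\boldsymbol{\rho}_n^{\ast} \cdot e_n = 0$, so maps from it into $V$ correspond bijectively to elements of $V_n$ annihilated by $\boldsymbol{\rho}_n^{\ast}$.

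For the cokernel, I need to identify the sub-$\C$-module of $\BS_{\ast}V$ whose level-$n$ component is $\boldsymbol{\rho}_n^{\ast}V_n$ (this is the image of the above natural transformation, which is closed under the $\C$-action by a further application of naturality) with $\BS_{\ast}\mi_{\ast}V$, i.e., prove $(\mi_{\ast}V)_{n+1} = \boldsymbol{\rho}_n^{\ast}V_n$. The containment $\boldsymbol{\rho}_n^{\ast}V_n \subseteq (\mi_{\ast}V)_{n+1}$ is immediate from $\boldsymbol{\rho}_n^{\ast} \in \mi_{\ast}$. For the reverse, a typical generator of $(\mi_{\ast}V)_{n+1}$ has the form $\gamma v$ with $\gamma : [i] \to [n+1]$ a morphism in $\mi_{\ast}$ and $v \in V_i$; the factorization from part (1) gives $\gamma = \boldsymbol{\rho}_n^{\ast}\tilde{\gamma}$ for a unique $\tilde{\gamma} : [i] \to [n]$ in $\C$, hence $\gamma v = \boldsymbol{\rho}_n^{\ast}(\tilde{\gamma}v) \in \boldsymbol{\rho}_n^{\ast}V_n$. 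This yields $\BD_{\ast}V = \BS_{\ast}V/\BS_{\ast}\mi_{\ast}V$, and the remaining isomorphisms $\BS_{\ast}V/\BS_{\ast}\mi_{\ast}V \cong \BS_{\ast}(V/\mi_{\ast}V) \cong \BS_{\ast}(k\C/\mi_{\ast}\otimes_{k\C}V)$ follow from exactness of the restriction functor $\BS_{\ast}$ and the standard identification of a cyclic quotient as a tensor product. The main subtlety is precisely the reverse inclusion $(\mi_{\ast}V)_{n+1} \subseteq \boldsymbol{\rho}_n^{\ast}V_n$, which is not obvious from the abstract definition of $\mi_{\ast}V$ and really depends on the explicit decomposition of $\mi_{\ast}$ from the remark.
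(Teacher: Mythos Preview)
Your proposal is correct and follows essentially the same approach as the paper: both arguments hinge on the decompositions $\mi_{\ast} = \bigoplus_{i} k\C\boldsymbol{\rho}_i^{\ast} = \bigoplus_{i} \boldsymbol{\rho}_i^{\ast}(k\C)$ from the remark, the naturality identity for $\boldsymbol{\rho}^{\ast}$, and the explicit factorization of morphisms landing in $\mi_{\ast}$. Your treatment of the reverse inclusion $(\mi_{\ast}V)_{n+1} \subseteq \boldsymbol{\rho}_n^{\ast}V_n$ is in fact a bit more direct than the paper's---you invoke the factorization $\gamma = \boldsymbol{\rho}_n^{\ast}\tilde{\gamma}$ from part~(1) immediately, whereas the paper first passes through the left-ideal decomposition before applying the right-ideal one---but the substance is the same.
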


In other words, $\BK_{\ast}$ is the annihilator functor $\mathrm{Ann}_{\mi_{\ast}} (-)$, and $\BD_{\ast}$ is the composite of $k\C/\mi_{\ast} \otimes_{k\C} - $ and $\BS_{\ast}$. We also deduce that the natural map $V \to \BS_{\ast}V$ is injective if and only if $V \cong \BS_{\ast} \mi_{\ast} V$.

\begin{proof}
We prove the conclusions for $\ast = a$.

(1): For $s \geqslant n \geqslant 1$, one has
\[
(\BS_a (\mi_a e_n))_s = (\mi_a e_n)_{s+1} = \langle \alpha: [n] \to [s+1] \mid \alpha(1) \neq 1 \rangle
\]
as free $k$-modules. Since $\alpha(1) \neq 1$, one can identify it with a morphism $\bar{\alpha}: [n] \to [s]$ with $\bar{\alpha}(i) = \alpha(i) - 1$, and the map $\alpha \mapsto \bar{\alpha}$ gives a $k$-module isomorphism between $\BS_{\ast} \mi_{\ast} e_n$ and $M(n)$. It is easy to check it is actually an $\C$-module homomorphism, keeping in mind that for a morphism $\alpha$ in $\C$, its action on a shift module $\BS_a V$ is given by the action of $\boldsymbol{\iota}_a(\alpha)$ on $V$. This proves (1).

(2): Denote the image of the natural map $V \to \BS_a V$ by $V'$. We claim that $V'$ coincides with the submodule $\BS_a \mi_a V$ of $\BS_a V$. Since this natural map is given by a sequence $\boldsymbol{\rho}_n^a$, $n \geqslant 1$, we have
\[
V' = \bigoplus_{n \geqslant 1} \boldsymbol{\rho}_n^a \cdot V_n,
\]
in particular, $V'_n = \boldsymbol{\rho}_n^a \cdot V_n$. On the other hand, $(\BS_a \mi_aV)_n = (\mi_aV)_{n+1} = e_{n+1} \mi_aV$. Thus it is enough to show that $\boldsymbol{\rho}_n^a \cdot V_n = e_{n+1} \mi_aV$. But $e_{n+1} \mi_a$ contains $\boldsymbol{\rho}^a_n$. Consequently, $\boldsymbol{\rho}_n^a \cdot V_n \subseteq e_{n+1} \mi_a V$.

To show the inclusion of the other direction, we note that $\mi_a$ is generated by $\boldsymbol{\rho}_n^a$ as a left ideal (see Remark 2.3). That is, $\mi_a$ is the sum of $k\C \boldsymbol{\rho}_i^a$ for $i \geqslant 1$. Consequently,
\[
e_{n+1} \mi_a V = e_{n+1}(\sum_{i \in \mathbb{N}} (k\C \boldsymbol{\rho}_i^a) \cdot (\bigoplus_{j \in \mathbb{N}} V_j)) = (\sum_{i \in \mathbb{N}} (e_{n+1} k\C \boldsymbol{\rho}_i^a) \cdot (\bigoplus_{j \in \mathbb{N}} V_j)) = \sum_{i \leqslant n} (e_{n+1} k\C\boldsymbol{\rho}_i^a \cdot V_i)
\]
where the last identity comes from the following two observations: $e_{n+1} k\C \boldsymbol{\rho}_i^a = 0$ for $i > n$, and $\boldsymbol{\rho}_i^a \cdot V_j = 0$ whenever $i \neq j$. But $\mi_a$ as a right ideal is also generated by those $\boldsymbol{\rho}_i^a$ (see Remark 2.3). In particular, for $i < n$, any morphism $\alpha$ in $e_{n+1} k\C \boldsymbol{\rho}_i^a$ from $[i]$ to $[n+1]$ can be written as a composite $\boldsymbol{\rho}_n^a \circ \beta \in \boldsymbol{\rho}_n^a k\C$. Therefore, for $i < n$,
\[
(e_{n+1} k\C \boldsymbol{\rho}_i^a) \cdot V_i \subseteq (e_{n+1} \boldsymbol{\rho}_n^a k\C) \cdot V_i = (\boldsymbol{\rho}_n^a e_n k\C e_i) \cdot V_i = \boldsymbol{\rho}_n^a \cdot (k\C([i], [n]) \cdot V_i) \subseteq \boldsymbol{\rho}_n^a \cdot V_n;
\]
and for $i =n$, $e_{n+1} k\C \boldsymbol{\rho}_n^a \cdot V_n = \boldsymbol{\rho}_n^a \cdot V_n$ as well. Therefore, $e_{n+1} \mi_a V = \boldsymbol{\rho}_n^a \cdot V_n$, and hence $V' = \BS_a \mi_a V$ as claimed.

Now we consider $\BK_a V$. For $n \geqslant 1$, $(\BK_aV)_n$ is the kernel of the map induced by $\boldsymbol{\rho}_n^a$. That is,
\[
(\BK_aV)_n = \{ v \in V_n \mid \boldsymbol{\rho}_n^a \cdot v = 0\}.
\]
Clearly, for $i \neq n$, $\boldsymbol{\rho}_i^a \cdot v = 0$. Thus one can rewrite the about identity as
\[
(\BK_aV)_n = \{ v \in V_n \mid \boldsymbol{\rho}_i^a \cdot v = 0, \, i \geqslant 1\}.
\]
But $\mi_a$ is the sum of $k\C \boldsymbol{\rho}_i^a$, so
\[
(\BK_a V)_n = \{ v \in V_n \mid \mi_a \cdot v = 0 \}
\]
and hence $\BK_a V = \mathrm{Ann}_{\mi_a}(V)$. Furthermore, the isomorphism
\[
(\BK_a V)_n \cong \Hom_{k\C} (k\C e_n/\mi_a e_n, V)
\]
comes from the observation that the module homomorphism $f: e_n \mapsto v \in V_n$ in $\Hom_{k\C} (k\C e_n, V)$ gives rise to a unique and well defined module homomorphism in $\Hom_{k\C} (k\C e_n/\mi_a e_n, V)$ if and only if $\mi_a e_n v = 0$, which is equivalent to the condition that $\boldsymbol{\rho}_n^a v = 0$ since $\mi_a e_n = k\C \boldsymbol{\rho}_n^a$.

Finally, one has
\[
\BD_a V = \BS_aV/V' = \BS_aV/\BS_a \mi_a V \cong \BS_a(V/\mi_a V) \cong \BS_a (k\C/\mi_a \otimes_{k\C} V)
\]
where the first isomorphism comes from the exactness of $\BS_a$ and the second one is well known.
\end{proof}

\subsection{The left adjoint of $\BS_{\ast}$}

As before, let $\ast \in  \{a, \, b\}$. Since $\BS_{\ast}: \C_+ \Module \to \C \Module$ is the restriction along the injective algebra homomorphism $\underline{\boldsymbol{\iota}}_{\ast}: k\C \to k\C_+$, it has a left adjoint
\[   \C \Module \to \C_+ \Module, \qquad
V\mapsto  k\C_+ \otimes_{k\C} V, \]
called the induction functor. Note that the right $\C$-module structure of $k\C_+$ is induced by $\underline{\boldsymbol{\iota}}_{\ast}$.

We give here another construction of a left adjoint of $\BS_\ast$. For any $\C$-module $V$, we define
a $\C_+$-module $\boldsymbol{\Gamma}_* V$ by
\[
(\boldsymbol{\Gamma}_\ast V)_n = V_0 \oplus \cdots \oplus V_{n-1} \qquad \mbox{ for each } n\geqslant 1;
 \]
for any morphism $\alpha: [m] \to [n]$ in $\C_+$, the induced map $(\boldsymbol{\Gamma}_* V)_m \to (\boldsymbol{\Gamma}_* V)_n$ is defined as follows. If $\ast = a$, then for each $s\in [m]$, the direct summand $V_{m-s}$ of $(\boldsymbol{\Gamma}_a V)_m$ is mapped to the direct summand
$V_{n - \alpha(s)}$ of $(\boldsymbol{\Gamma}_a V)_n$ via the action of the morphism
\[
[m-s] \to [n-\alpha(s)], \qquad i \mapsto \alpha(s+i)-\alpha(s).
\]
If $\ast = b$, then for each $s \in [m]$,
the direct summand $V_{s-1}$ of $(\boldsymbol{\Gamma}_b V)_m$ is mapped to the direct summand
$V_{\alpha(s)-1}$ of $(\boldsymbol{\Gamma}_b V)_n$ via the action of the morphism
\[
[s-1] \to [\alpha(s)-1], \qquad i \mapsto \alpha(i).
\]
It is easy to check that this indeed defines a $\C_+$-module $\boldsymbol{\Gamma}_\ast V$ and hence a functor $\boldsymbol{\Gamma}_\ast : \C\Module\to \C_+\Module$.
Moreover, for each $m\in \N$, one has an isomorphism of $\C_+$-modules
$\boldsymbol{\Gamma}_\ast M(m) \cong M(m+1)$ defined as follows: if $\ast=a$, then for each $n$, let
\[ f: M(m+1)_n \to (\boldsymbol{\Gamma}_\ast M(m))_n = M(m)_0 \oplus \cdots \oplus M(m)_{n-1}
\]
be the map sending each morphism $\alpha:[m+1]\to [n]$ in $M(m+1)_n$ to the morphism
\[ f(\alpha) : [m] \to [n-\alpha(1)], \qquad i\mapsto  f(i+1)-f(1) \]
in the direct summand $M(m)_{n-\alpha(1)}$ of $(\boldsymbol{\Gamma}_\ast M(m))_n$; similarly if $\ast=b$.

\begin{remark} \normalfont
The functor $\boldsymbol{\Gamma}_{\ast}$ introduced here is different from that described in \cite[Subsection 9.1]{GS}. Actually, it corresponds to the induction functor defined in \cite[Subsection 13.4]{GS}.
\end{remark}

\begin{proposition}
The functor $\boldsymbol{\Gamma}_\ast$ is exact and is left adjoint to the functor $\BS_\ast$.
\end{proposition}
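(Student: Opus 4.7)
The plan is to establish exactness and the adjunction separately, treating the case $\ast = a$ in detail; the case $\ast = b$ is entirely parallel. Exactness is nearly immediate: since $(\boldsymbol{\Gamma}_a V)_n = V_0 \oplus \cdots \oplus V_{n-1}$ as a $k$-module, and the action of every morphism $\alpha \in \C_+([m], [n])$ on $\boldsymbol{\Gamma}_a V$ maps each summand $V_{m-s}$ into the summand $V_{n - \alpha(s)}$ by a rule in which $V$ enters only functorially, applying $\boldsymbol{\Gamma}_a$ to a short exact sequence of $\C$-modules yields, at each level $n \geqslant 1$, a finite direct sum of short exact sequences in $k \Module$, which is itself short exact.

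For the adjunction, I would write down an explicit natural bijection
$$\Theta_{V, W} : \Hom_{\C_+ \Module}(\boldsymbol{\Gamma}_a V, W) \to \Hom_{\C \Module}(V, \BS_a W)$$
together with its inverse. In one direction, given $\Phi : \boldsymbol{\Gamma}_a V \to W$, let $\Theta_{V, W}(\Phi)_m$ be the composite of the inclusion of $V_m$ as the summand indexed by $s = 1$ of $(\boldsymbol{\Gamma}_a V)_{m+1}$ with $\Phi_{m+1} : (\boldsymbol{\Gamma}_a V)_{m+1} \to W_{m+1} = (\BS_a W)_m$. In the other direction, given $\phi : V \to \BS_a W$ with components $\phi_m : V_m \to W_{m+1}$, define $\Phi_n$ on the summand $V_{n-s}$ of $(\boldsymbol{\Gamma}_a V)_n$, for $s \in [n]$, by $W(\lambda_s) \circ \phi_{n-s}$, where $\lambda_s \in \C_+([n-s+1], [n])$ is the shift morphism $i \mapsto i + s - 1$ (equivalently, $(\boldsymbol{\rho}^a)^{s-1}$).

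The central verification is that this second assignment produces a $\C_+$-module homomorphism. Restricting the desired identity $\Phi_n \circ (\boldsymbol{\Gamma}_a V)(\alpha) = W(\alpha) \circ \Phi_m$ to the summand $V_{m-s}$ of $(\boldsymbol{\Gamma}_a V)_m$, the equation reduces, after invoking the $\C$-equivariance of $\phi$ to convert the action on $V$ of the auxiliary morphism $\alpha'_s : [m-s] \to [n-\alpha(s)]$, $i \mapsto \alpha(s+i) - \alpha(s)$, into the action on $W$ of its extension $\boldsymbol{\iota}_a(\alpha'_s)$, to the identity
$$\lambda_{\alpha(s)} \circ \boldsymbol{\iota}_a(\alpha'_s) = \alpha \circ \lambda_s$$
of morphisms $[m-s+1] \to [n]$ in $\C_+$, which I would check by direct evaluation at each element of $[m-s+1]$. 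Bijectivity of $\Theta_{V, W}$ is then short: in one order the composition produces a factor $W(\lambda_1) = W(\mathrm{id})$, and in the other order the non-$s=1$ components of $\Phi$ are recovered by applying the just-proved $\C_+$-equivariance of $\Phi$ to $\lambda_s$ itself. Naturality in $V$ and $W$ is visible from the formulas.

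The main obstacle is conceptual bookkeeping: tracking the indexing conventions for the summands (which differ between $\ast = a$ and $\ast = b$), the distinction between morphisms in $\C$ and their extensions in $\C_+$, and the correct labeling of which summand plays the role of the image of the unit of the adjunction. Once these are set correctly, all verifications are mechanical, but it is easy to introduce off-by-one errors if the summand index $s$ and the degree-shift are not consistently bookkept.
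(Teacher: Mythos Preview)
Your proposal is correct, and the exactness argument matches the paper's. For the adjunction, however, you take a different route from the paper. The paper does not construct the hom-set bijection directly; instead it observes that $\boldsymbol{\Gamma}_a$ is exact and preserves direct sums, so by the general machinery of Section~\ref{subsection:right adjoint} it automatically has a right adjoint $\boldsymbol{\Gamma}_a^\dag$ given by $(\boldsymbol{\Gamma}_a^\dag V)_n = \Hom_{k\C_+}(\boldsymbol{\Gamma}_a M(n), V)$. The paper then invokes the explicit isomorphism $\boldsymbol{\Gamma}_a M(n) \cong M(n+1)$ established just before the proposition to identify $(\boldsymbol{\Gamma}_a^\dag V)_n$ with $V_{n+1}$, and checks that the induced $\C$-module structure agrees with that of $\BS_a V$. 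Your approach is more self-contained---it does not appeal to the abstract right-adjoint construction or to the isomorphism $\boldsymbol{\Gamma}_a M(n) \cong M(n+1)$---at the cost of a longer elementwise verification (your key identity $\lambda_{\alpha(s)} \circ \boldsymbol{\iota}_a(\alpha'_s) = \alpha \circ \lambda_s$). The paper's approach buys uniformity, since the same Section~\ref{subsection:right adjoint} template is reused for $\boldsymbol{\Psi}_a$, $\BS_a$, $\BQ_a$, and $\BD_a$ throughout Section~3; your approach buys transparency, since the unit of the adjunction (the inclusion of $V_m$ as the $s=1$ summand of $(\boldsymbol{\Gamma}_a V)_{m+1}$) is made visible.
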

\begin{proof}
We prove the proposition for $\ast=a$; the case $\ast=b$ is similar.

Exactness of the functor $\boldsymbol{\Gamma}_a$ is clear by construction and, moreover, it transforms direct sums to direct sums. Therefore by Section \ref{subsection:right adjoint} it has a right adjoint functor $\boldsymbol{\Gamma}_a^\dag$. Identifying $\boldsymbol{\Gamma}_a M(n)$ with $M(n+1)$ as above, we have, for any $\C_+$-module $V$ and $n$:
\[ (\boldsymbol{\Gamma}_a^\dag V)_n = \Hom_{k\C_+} (M(n+1), V) \cong V_{n+1}, \]
where the isomorphism is $\phi \mapsto \phi(e_{n+1})$. Now consider any morphism $\alpha: [n]\to [\ell]$ in $\C$. Then by Section \ref{subsection:right adjoint} we have $\rho_\alpha: M(\ell) \to M(n)$ and hence $\boldsymbol{\Gamma}_a (\rho_\alpha) :
\boldsymbol{\Gamma}_a M(\ell) \to \boldsymbol{\Gamma}_a M(n)$, which is identified with a morphism $M(\ell+1)\to M(n+1)$. From definitions, this morphism sends $e_{\ell+1}\in M(\ell+1)_{\ell+1}$ to $ \boldsymbol{\iota}_a(\alpha) \in M(n+1)_{\ell+1}$; in more detail, $e_{\ell+1}\in M(\ell+1)_{\ell+1}$ is identified with
$e_\ell \in M(\ell)_\ell$ in $(\boldsymbol{\Gamma}_a M(\ell))_{\ell+1}$,
and $\boldsymbol{\Gamma}_a (\rho_\alpha)$ sends it to
$\alpha\in M(n)_\ell$ in $(\boldsymbol{\Gamma}_a M(n))_{\ell+1}$,
which in turn is identified with $ \boldsymbol{\iota}_a(\alpha) \in M(n+1)_{\ell+1}$.
Therefore if $v=\phi(e_{n+1})$, then $(\phi\circ\boldsymbol{\Gamma}_a (\rho_\alpha))(e_{\ell+1})   =  \boldsymbol{\iota}_a(\alpha)(v)$. Hence $\boldsymbol{\Gamma}_a^\dag V \cong \BS_a V$.
\end{proof}

\subsection{The Left adjoint of $\boldsymbol{\Gamma}_{\ast}$}

To construct the left adjoint of $\boldsymbol{\Gamma}_{\ast}$, we need the following notations. Let $\mathfrak{A}$ be the $k$-submodule of $k\C$ spanned by elements of the form $\alpha - \beta$, where $\alpha$ and $\beta$ are morphisms with the same source and target. Clearly, $\mathfrak{A}$ is a two-sided ideal of $k\C$, called the \textbf{augmentation ideal}.

For any $\C$-module $V$, $\mathfrak{A}V$ is the span of $(\alpha - \beta) \cdot v$ for every $v\in V_m$ where $m<n$ and $\alpha, \beta : [m]\to [n]$.
Consequently, for each $m\ < n$, every morphism $\alpha: [m] \to [n]$ acts on the quotient module $V/\mathfrak{A}V$ in the same way. We define the $k$-module $\varinjlim V$ to be the direct limit of
\[
\xymatrix{
V_0 \ar[r]^-{\boldsymbol{\rho}_0^b} & V_1 \ar[r]^-{\boldsymbol{\rho}_1^b} & V_2 \ar[r]^-{\boldsymbol{\rho}_2^b} & \cdots.
}
\]
Concretely, each element of $\varinjlim V$ is an equivalence class of a sequence $\{v_m, \, v_{m+1}, \, v_{m+2}, \,\ldots \}$ (for some $m$) where each $v_n \in V_n$ and $\boldsymbol{\rho}^b_n \cdot v_n = v_{n+1}$, and where two such sequences are equivalent if they eventually coincide.


\begin{example} \label{example:limit} \normalfont
For each $r\geqslant n$, we have a linear bijection $(M(n) / \mathfrak{A}M(n))_r \xrightarrow{\simeq} k$ induced by sending each morphism $\alpha: [n]\to [r]$ in $M(n)_r$ to $1\in k$. Under this identification, the map
\[ \boldsymbol{\rho}_r^b : (M(n) / \mathfrak{A}M(n))_r \to (M(n) / \mathfrak{A}M(n))_{r+1}\]
is the identity map $k\to k$. It follows that we have an isomorphism $\varinjlim M(n) / \mathfrak{A}M(n) \cong k$.

\end{example}

We also observe that every $\C$-module $V$ gives rise to a functor $\mathcal{S}_b^{V}: \C \to \C \Module$ defined in the following way. For each ordered set $[n]$, let $\mathcal{S}_b^{V}([n]) = \BS_b^n V$. For each morphism $\alpha: [m] \to [n]$ in $\C$, to define the induced $\C$-module homomorphism $\BS_b^m V \to \BS_b^n V$, we need to define for each $r\in\N$ a map
$(\BS_b^m V)_r \to (\BS_b^n V)_r$. Since $(\BS_b^m V)_r = V_{r+m}$ and $(\BS_b^n V)_r = V_{r+n}$, we need to define a map $V_{r+m} \to V_{r+n}$; this map is defined to be the map induced by
\[ [r+m] \to [r+n], \quad i \mapsto \left\{
\begin{array}{ll}
i & \mbox{ if } 1\leqslant i \leqslant r,\\
\alpha(i-r)+r & \mbox{ if } r+1\leqslant i \leqslant r+m.
\end{array}\right. \]
 It is easy to see that $\mathcal{S}_b^{V}$ is indeed a functor and the assignment $V \to \mathcal{S}_b^{V}$ is functorial. In other words, $\mathcal{S}_b$ is a bifunctor from $\C \Module \times \C$ to $\C \Module$. Similarly, one can define $\mathcal{S}_a^{V}: \C \to \C \Module$.

Composed with the derivative functor $\BD_b: \C \Module \to \C \Module$, we obtain a functor $\BD_b \mathcal{S}_b^{V}: \C \to \C \Module$. For $n \in \N$, $(\BD_b \mathcal{S}_b^{V}) ([n]) = \BD_b \BS_b^n V$. Observe that:
\[
(\BD_b \BS_b^n V)_r = \mathrm{coker}( V_{r+n} \longrightarrow V_{r+1+n} ),
\]
where the map is induced by the order-preserving map $[r+n]\to [r+1+n]$ whose image does not contain $r+1\in [r+1+n]$.


Consider the bifunctor defined by the following composition:

\[ \xymatrix{
 \C \Module \times \C \ar[r]^-{\mathcal{S}_b} & \C \Module \ar[r]^-{\BD_b} & \C \Module \ar[rr]^-{k\C/\mathfrak{A} \otimes_{k\C} -} & & \C \Module \ar[r]^-{\varinjlim} & k \Module.} \]
For each $\C$-module $V$, we define the $\C$-module $\boldsymbol{\Psi}_a V$ to be the composition:
\[
\xymatrix{
\C \ar[r]^-{\mathcal{S}_b^V} & \C \Module \ar[r]^-{\BD_b} & \C \Module \ar[rr]^-{k\C/\mathfrak{A} \otimes_{k\C} -} & & \C \Module \ar[r]^-{\varinjlim} & k \Module.
}
\]
Therefore we obtain a functor
\[ \boldsymbol{\Psi}_a  :\C \Module \to \C \Module, \qquad V \mapsto \boldsymbol{\Psi}_a V. \]
Since colimit is computed pointwise, for any $V \in \C \Module$, we have
\[
(\boldsymbol{\Psi}_a V)_n = \varinjlim ( (k\C /\mathfrak{A}) \otimes_{k\C} \BD_b \mathcal{S}_b^V) ([n]) = \varinjlim ( (k\C /\mathfrak{A}) \otimes_{k\C}  \BD_b \BS_b^n V).
\]
Here we write $\boldsymbol{\Psi}_a$ instead of $\boldsymbol{\Psi}_b$ because we will show that $\boldsymbol{\Psi}_a$ defined in this way is the left adjoint of $\boldsymbol{\Gamma}_a$.

\begin{remark} \normalfont
The functor $\boldsymbol{\Psi}_{\ast}$ defined above is not the same as the one described in \cite[Subsection 9.1]{GS}.
\end{remark}

\begin{proposition}
The functor $\boldsymbol{\Psi}_a$ is the left adjoint of $\boldsymbol{\Gamma}_a$.
\end{proposition}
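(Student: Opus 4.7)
The plan is to apply the right-adjoint construction of Subsection~\ref{subsection:right adjoint} to $\boldsymbol{\Psi}_a$ and then identify the resulting right adjoint with $\boldsymbol{\Gamma}_a$. First I would check that $\boldsymbol{\Psi}_a$ is right exact and commutes with direct sums, which is immediate because each of its ingredients---the iterated shift $\BS_b^n$, the derivative $\BD_b$, the tensor $k\C/\mathfrak{A}\otimes_{k\C}-$, and the sequential colimit $\varinjlim$---has both properties. This produces a right adjoint $\boldsymbol{\Psi}_a^{\dag}$ with $(\boldsymbol{\Psi}_a^{\dag} W)_n = \Hom_{\C\Module}(\boldsymbol{\Psi}_a M(n), W)$, and the task becomes to identify this with $(\boldsymbol{\Gamma}_a W)_n = W_0\oplus \cdots \oplus W_{n-1}$ naturally in both $n$ and $W$.

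The core computation is to establish $\boldsymbol{\Psi}_a M(n) \cong M(0)\oplus M(1)\oplus \cdots \oplus M(n-1)$ as $\C$-modules, since by Yoneda this immediately yields $(\boldsymbol{\Psi}_a^{\dag} W)_n \cong \bigoplus_{i=0}^{n-1} W_i$. I would first establish the splitting $\BS_b M(i)\cong M(i)\oplus M(i-1)$ (partitioning order-preserving injections $[i]\to[s+1]$ according to whether $s+1$ lies in the image) and iterate to get $\BS_b^r M(n) \cong \bigoplus_{k=0}^{r}\binom{r}{k} M(n-k)$, with the convention $M(i)=0$ for $i<0$. Direct inspection of $(\BD_b V)_m = V_{m+1}/\boldsymbol{\rho}^b_m\cdot V_m$ gives $\BD_b M(i)\cong M(i-1)$ for $i\geqslant 1$ and $\BD_b M(0)=0$, and hence $\BD_b\BS_b^r M(n) \cong \bigoplus_{k=0}^{\min(r,n-1)} \binom{r}{k}M(n-1-k)$. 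Combining this with Example~\ref{example:limit}, which gives $\varinjlim((k\C/\mathfrak{A})\otimes_{k\C} M(j))\cong k$, together with the fact that the two outer functors commute with direct sums, yields $(\boldsymbol{\Psi}_a M(n))_r \cong k^{\sum_{i=0}^{n-1}\binom{r}{i}} = (\bigoplus_{i=0}^{n-1} M(i))_r$.

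What remains, and what I expect to be the main obstacle, is to upgrade this degree-by-degree identification to an isomorphism of $\C$-modules compatible with the action of morphisms in $\C$, and further to show that the resulting iso $\boldsymbol{\Psi}_a^{\dag}\cong\boldsymbol{\Gamma}_a$ is natural in $n$---that is, that precomposition with $\boldsymbol{\Psi}_a(\rho_\alpha)$ on the left corresponds to the summand-wise recipe for $\boldsymbol{\Gamma}_a$ given in Subsection~3.2 on the right. This requires tracking the image-set combinatorics of order-preserving injections through four stacked functors (iterated shift, derivative, quotient by $\mathfrak{A}$, colimit) and verifying that the induced map on summands sends $W_{n-s}\subseteq(\boldsymbol{\Gamma}_a W)_n$ into $W_{\ell-\alpha(s)}\subseteq(\boldsymbol{\Gamma}_a W)_\ell$ via the morphism $[n-s]\to[\ell-\alpha(s)]$, $i\mapsto\alpha(s+i)-\alpha(s)$, for every $\alpha:[n]\to[\ell]$. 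Once this combinatorial compatibility is nailed down, the adjunction $\boldsymbol{\Psi}_a\dashv \boldsymbol{\Gamma}_a$ follows from the isomorphism $\boldsymbol{\Psi}_a^{\dag}\cong\boldsymbol{\Gamma}_a$.
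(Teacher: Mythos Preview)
Your overall strategy matches the paper's: verify right-exactness and preservation of direct sums, build the right adjoint $\boldsymbol{\Psi}_a^\dag$ via Subsection~\ref{subsection:right adjoint}, compute $\boldsymbol{\Psi}_a M(n)$, and then check compatibility with the $\boldsymbol{\Gamma}_a$-recipe. The difference is in how $\boldsymbol{\Psi}_a M(n)$ is computed. You iterate the splitting $\BS_b M(i)\cong M(i)\oplus M(i-1)$, which gives the correct dimension of $(\boldsymbol{\Psi}_a M(n))_r$ but, as you note, leaves the $\C$-module structure in $r$ to be extracted afterward by tracking subset-combinatorics through four functors. The paper instead computes $(\BD_b\BS_b^r M(n))_p$ in one step as the cokernel of $M(n)_{p+r}\to M(n)_{p+1+r}$ (composition with the injection $[p+r]\hookrightarrow[p+1+r]$ missing $p+1$): a basis is given by maps $\alpha:[n]\to[p+1+r]$ that hit $p+1$, and writing $\alpha(s)=p+1$ factors such an $\alpha$ uniquely into a pair $([s-1]\to[p],\ [n-s]\to[r])$, yielding
\[
(\BD_b\BS_b^r M(n))_p \;\cong\; \bigoplus_{s=1}^n M(s-1)_p\otimes_k M(n-s)_r.
\]
This tensor form separates the internal variable $p$ (eaten by the colimit) from the external variable $r$ (which carries the $\C$-structure of $\boldsymbol{\Psi}_a M(n)$); after collapsing each $M(s-1)$ to $k$ via Example~\ref{example:limit} one reads off $\boldsymbol{\Psi}_a M(n)\cong\bigoplus_{s=1}^n M(n-s)$ with its module structure already visible, so the obstacle you anticipate does not arise. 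Your route also works---your summands are naturally indexed by subsets $T\subseteq[r]$ with $|T|\leqslant n-1$, and a morphism $\alpha:[r]\to[\ell]$ acts by $T\mapsto\alpha(T)$, which is precisely $\bigoplus_{i=0}^{n-1} M(i)$---but the paper's single bijection makes both the module structure and the final naturality-in-$n$ check immediate rather than something to be chased.
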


\begin{proof}
For each $n\in \N$, the functor from $\C\Module$ to $k\Module$ sending $V$ to $(\boldsymbol{\Psi}_a V)_n$ is right exact; hence $\boldsymbol{\Psi}_a$ is a right exact functor. By Section \ref{subsection:right adjoint}, the functor $\boldsymbol{\Psi}_a$ has a right adjoint functor $\boldsymbol{\Psi}_a^\dag$ defined by
\[ (\boldsymbol{\Psi}_a^\dag V)_m = \Hom_{k\C} ( \boldsymbol{\Psi}_a (M(m)), V ) \quad \mbox{ for each } m\in \N.  \]
We shall prove that $\boldsymbol{\Gamma}_a$ is isomorphic to  $\boldsymbol{\Psi}_a^\dag$.

First, for each $m,n,r\in\N$, we have:
\[
(\BD_b \BS_b^n M(m))_r = \mathrm{coker}(M(m)_{r+n} \longrightarrow M(m)_{r+1+n} ),
\]
where the map $M(m)_{r+n} \longrightarrow M(m)_{r+1+n}$ is induced by $\alpha_{r+n, r+1}: [r+n]\to [r+1+n]$ (the order-preserving map which does not contain $r+1$ in its range). An order-preserving map $[m]\to [r+1+n]$ factors through $\alpha_{r+n, r+1}$ if and only if $r+1$ is not in its range. Suppose that $\alpha : [m] \to [r+1+n]$ is order-preserving and $\alpha(s)=r+1$ for some $s\in [m]$. Then we have order-preserving maps
\[ [s-1] \to [r], \quad i \mapsto \alpha(i) \]
and
\[ [m-s] \to [n], \quad i \mapsto \alpha(s+i)-\alpha(s). \]
Moreover every such pair of order-preserving maps $[s-1]\to r$ and $[m-s]\to [n]$ arises from a unique order-preserving $\alpha: [m]\to [r+1+n]$ such that $\alpha(s)=r+1$.
We see that there is a natural isomorphism
\[ (\BD_b \BS_b^n M(m))_r \cong \bigoplus_{s=1}^m M(s-1)_r \otimes_k M(m-s)_n. \]
Hence, by Example \ref{example:limit},
\[ \varinjlim ( (k\C /\mathfrak{A} )\otimes_{k\C} \BD_b \BS_b^n M(m) ) \cong
  \bigoplus_{s=1}^m  M(m-s)_n.  \]
Therefore we have a natural isomorphism
\[ \boldsymbol{\Psi}_a (M(m)) \cong \bigoplus_{s=1}^m  M(m-s). \]

Now, for each $m\in\N$ and $s\in [m]$, there is a linear bijection
\[ \Hom_{k\C} (M(m-s), V) \xrightarrow{\simeq} V_{m-s}, \qquad \phi \mapsto \phi(e_{m-s}).\] Hence, for each $m\in \N$, we obtain a linear bijection
\[ (\boldsymbol{\Psi}_a^\dag V)_m \xrightarrow{\simeq} \bigoplus_{s=1}^m V_{m-s} = (\boldsymbol{\Gamma}_a V)_m. \]
It remains to see that these linear bijections for all $m\in \N$ are compatible with the $\C$-module structures of $\boldsymbol{\Psi}_a^\dag V$ and $\boldsymbol{\Gamma}_a V$ so that we have
a natural $\C$-module isomorphism $\boldsymbol{\Psi}_a^\dag V \to \Gamma_a V$. To this end, consider any morphism $\alpha: [m]\to [m']$ in $\C$ and let $\rho_\alpha : M(m') \to M(m)$ be the morphism $\beta \mapsto \beta\circ \alpha$. Then we have the homomorphism
\[ \boldsymbol{\Psi}_a (\rho_a) : \boldsymbol{\Psi}_a(M(m')) \to \boldsymbol{\Psi}_a(M(m)). \]
Recall from above that we have the natural isomorphisms
\[ \boldsymbol{\Psi}_a(M(m'))  \cong \bigoplus_{s'=1}^{m'} M(m'-s') ,
\qquad \boldsymbol{\Psi}_a(M(m)) \cong  \bigoplus_{s=1}^{m} M(m-s).\]
Hence $\boldsymbol{\Psi}_a (\rho_a)$ gives a homomorphism
\[    \bigoplus_{s'=1}^{m'} M(m'-s') \to \bigoplus_{s=1}^{m} M(m-s); \]
let us denote this homomorphism also by $\boldsymbol{\Psi}_a (\rho_a)$.

Let $s'\in [m']$ and consider the identity morphism $e_{m'-s'} \in M(m'-s')_{m'-s'}$.
Let $\beta_0: [s'-1]\to [r]$ be any morphism in $\C$ and consider the morphism $\beta: [m']\to [r+1+(m'-s')]$ defined by
\[ \beta(i) =
\left\{ \begin{array}{ll}
\beta_0(i) & \mbox{ if } 1\leqslant i\leqslant s'-1,\\
r+1+(i-s') & \mbox{ if } s' \leqslant i \leqslant m'.
\end{array} \right. \]
Then $\beta\in M(m')_{r+1+(m'-s')}$ and is a representative of an element $\bar{\beta}$ in
$(\BD_b \BS_b^{m'-s'} M(m'))_r$; this element $\bar{\beta}$ is a representative of $e_{m'-s'}$ under the isomorphism
\[ \varinjlim ( (k\C /\mathfrak{A} )\otimes_{k\C} \BD_b \BS_b^{m'-s'} M(m') ) \cong
  \bigoplus_{t=1}^{m'} M(m'-t)_{m'-s'}.  \]
Now consider $\rho_\alpha(\beta) = \beta\circ\alpha\in M(m)_{r+1+(m'-s')}$.
If $s'$ is not in the range of $\alpha$, then $r+1$ is not in the range of $\rho_\alpha(\beta)$;
in this case, $\boldsymbol{\Psi}_a (\rho_a)$ sends $e_{m'-s'}$ to $0$. On the other hand, if $s'=\alpha(s)$ where $s\in [m]$, then $\rho_\alpha(\beta)(s) = r+1$; in this case,
$\boldsymbol{\Psi}_a (\rho_a)$ sends $e_{m'-s'}$ to the morphism $\alpha_s \in M(m-s)_{m'-s'}$ defined by
\[ \alpha_s : [m-s] \to [m'-s'], \quad i \mapsto \alpha(s+i) - \alpha(s). \]
It follows that the restriction of $\boldsymbol{\Psi}_a (\rho_a)$ to $M(m'-s')$ is the zero map
if $s'$ is not in the range of $\alpha$, and for each $s\in [m]$,
the restriction of $\boldsymbol{\Psi}_a (\rho_a)$ to $M(m'-\alpha(s))$ is the map
\[ M(m'-\alpha(s)) \to M(m-s) , \quad \phi\mapsto \phi\circ \alpha_s. \]
Consider any $v\in V_{m-s}$ and let $\psi :M(m-s)\to V$ be the homomorphism such that $\psi(e_{m-s})=v$. Then
\[ ( \psi \circ \boldsymbol{\Psi}_a (\rho_a) )(e_{m'-\alpha(s)}) = \psi(\alpha_s) = \alpha_s(v) \in V_{m'-\alpha(s)}. \]
But the map $V_{m-s} \to V_{m'-\alpha(s)}$ sending $v$ to $\alpha_s(v)$ is exactly the restriction of the action of $\alpha$ on the direct summand $V_{m-s}$ of the $\C$-module $\boldsymbol{\Gamma}_a V$.
\end{proof}

\subsection{The negative shift functor $\BB_{\ast}$}

For any morphism $\alpha:[n]\to [\ell]$ where $n\geqslant 1$, we define the morphisms
\[ \alpha_a : [n-1]\to [\ell-1]\qquad\mbox{ and }\qquad \alpha_b:[n-1]\to [\ell-1] \]
by
\[ \alpha_a(i) = \alpha(i+1)-1, \qquad \alpha_b(i)=\alpha(i), \qquad \mbox{ for each } i\in [n-1]. \]

Consider any $\C$-module $V$.
For $\ast=a\mbox{ or }b$, we shall define a $\C_+$-module $\BB_\ast V$ as follows.
For each $n\geqslant 1$, let $(\BB_\ast V)_n = V_{n-1}$.
For each morphism $\alpha: [n]\to [\ell]$, we define the induced map
$(\BB_a V)_n \to (\BB_a V)_\ell$ to be the map $V_{n-1}\to V_{\ell-1}$
induced by $\alpha_a$ if $\alpha(1)=1$, otherwise it is the zero map;
similarly, we define the induced map
$(\BB_b V)_n \to (\BB_b V)_\ell$ to be the map $V_{n-1}\to V_{\ell-1}$
induced by $\alpha_b$ if $\alpha(n)=\ell$, otherwise it is the zero map.
This construction is functorial in $V$ and hence defines the functors
$\BB_\ast : \C\Module \to \C_+\Module$, which we shall also regard as functors to $\C\Module$  by regarding $\C_+\Module$ as a subcategory of $\C\Module$ in the obvious way. We call $\BB_\ast$ the negative shift functor.

On the other hand, the self-submerging functor $\boldsymbol{\pi}_{\ast}: \C_+ \to \C$ also induces a functor as follows via pull-back:
\[
\BB'_{\ast}: \C \Module \to \C_+ \Module, \quad V \to V \circ \boldsymbol{\pi}_{\ast}.
\]
Again, we can view $\BB'_{\ast}$ as an endo-functor in $\C \Module$. Furthermore, the natural transformation $\boldsymbol{\rho}^{\ast}: \boldsymbol{\pi}_{\ast} \to \ID_{\C}$ induces a natural module homomorphism $\BB'_{\ast} V \to V$ for each $V \in \C \Module$.

In the language of category algebras, there is a surjective homomorphism $\varpi_a: k\C_+\to k\C$ defined by $\varpi_a(\alpha)=\alpha_a$ if $\alpha(1)=1$, otherwise $\varpi_a(\alpha)=0$. The kernel of $\varpi_a$ is $\mi_a$, so we have an isomorphism $k\C_+/\mi_a \cong k\C$. Similarly, there is an isomorphism $k\C_+/\mi_b \cong k\C$. Thus the functor $\BB_\ast$ is the restriction along $k\C_+ \to k\C_+/\mi_* \cong k\C$. Analogously, the functor $\boldsymbol{\pi}_{\ast}$ also induces a surjective algebra homomorphism $k\C_+ \to k\C$, and $\BB'_{\ast}$ is the restriction along it. Consequently, we obtain an isomorphism $\BB_{\ast} \cong \BB'_{\ast}$.

\begin{lemma}
Notations as before. Then one has:
\begin{enumerate}
\item $\BS_{\ast} \circ \BB_{\ast}' \cong \ID_{\C \Module}$.
\item The kernel of the algebra homomorphism $k\C_+ \to k\C$ induced by $\boldsymbol{\pi}_a$ (resp., $\boldsymbol{\pi}_b$) is spanned by $\langle \alpha_{m, 1} - \alpha_{m, 2} \mid m \geqslant 1 \rangle$ (resp., $\langle \alpha_{m, m} - \alpha_{m, m+1} \rangle$) as a left $k\C$-ideal.
\end{enumerate}
\end{lemma}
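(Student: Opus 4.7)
I would split the proof according to the two parts.

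For part (1), the pullback descriptions give $\BS_\ast V = V \circ \boldsymbol{\iota}_\ast$ and $\BB'_\ast V = V \circ \boldsymbol{\pi}_\ast$, so $\BS_\ast \BB'_\ast V = V \circ (\boldsymbol{\pi}_\ast \circ \boldsymbol{\iota}_\ast)$. By Lemma \ref{embedding and submerging}(1), $\boldsymbol{\pi}_\ast \circ \boldsymbol{\iota}_\ast \cong \ID_\C$, and the isomorphism $\BS_\ast \BB'_\ast \cong \ID_{\C \Module}$ follows immediately.

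For part (2), I would treat only $\ast = a$ since the case $\ast = b$ is completely dual. Let $K$ denote the kernel of the algebra homomorphism $k\C_+ \to k\C$ induced by $\boldsymbol{\pi}_a$, which sends each morphism $\alpha$ to $\boldsymbol{\pi}_a(\alpha)$, and let $J$ be the left $k\C$-ideal of $k\C_+$ generated by the elements $\alpha_{m,1} - \alpha_{m,2}$ for $m \geqslant 1$. Two morphisms $\alpha, \alpha' : [m] \to [n]$ in $\C_+$ are identified by this homomorphism iff they agree on $\{2, \ldots, m\}$ (vacuously when $m=1$), so $K$ is spanned over $k$ by differences $\alpha - \alpha'$ of such pairs. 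A direct check shows $\boldsymbol{\pi}_a(\alpha_{m,1}) = \boldsymbol{\pi}_a(\alpha_{m,2}) = \alpha_{m-1,1}$, so each generator lies in $K$, giving $J \subseteq K$.

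For the reverse inclusion, I would show that every such difference $\alpha - \alpha'$ belongs to $J$. First reduce to the case $\alpha'(1) = 1$: if both $\alpha(1), \alpha'(1) > 1$, let $\gamma : [m] \to [n]$ be the morphism that agrees with $\alpha$ and $\alpha'$ on $\{2, \ldots, m\}$ and satisfies $\gamma(1) = 1$, and write $\alpha - \alpha' = (\alpha - \gamma) - (\alpha' - \gamma)$. With $\alpha'(1) = 1$ and $\alpha(1) = p > 1$, define $\delta : [m+1] \to [n]$ by $\delta(1) = 1$, $\delta(2) = p$, and $\delta(i+1) = \alpha(i)$ for $i \geqslant 2$; strict monotonicity of $\alpha$ makes $\delta$ order-preserving. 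A direct computation gives $\delta \circ \alpha_{m,1} = \alpha$ and $\delta \circ \alpha_{m,2} = \alpha'$, so $\delta \cdot (\alpha_{m,1} - \alpha_{m,2}) = \alpha - \alpha'$ lies in $J$. Since $\delta(1) = 1$, the element $\delta$ is in the image of $\underline{\boldsymbol{\iota}}_a : k\C \to k\C_+$, so left multiplication by $k\C$ (rather than by all of $k\C_+$) suffices. The main point needing care is the well-definedness and monotonicity of the auxiliary morphisms $\gamma$ and $\delta$; beyond that, everything is routine bookkeeping, and the $\ast = b$ case follows by interchanging the roles of the first and last positions.
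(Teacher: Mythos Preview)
Your proof is correct and follows essentially the same approach as the paper. For part (1) both you and the paper invoke $\boldsymbol{\pi}_\ast \circ \boldsymbol{\iota}_\ast \cong \ID_\C$ directly; for part (2) both arguments identify the kernel as the $k$-span of differences $\alpha - \alpha'$ of morphisms agreeing on $\{2,\ldots,m\}$ and then exhibit each such difference as a left multiple of $\alpha_{m,1} - \alpha_{m,2}$. The only difference is that the paper handles the general pair in one step (assuming without loss of generality $\alpha(1) > \beta(1)$ and constructing a single $\gamma$ with $\gamma(1)=\beta(1)$), whereas you insert an extra reduction to $\alpha'(1)=1$ before constructing your $\delta$; this intermediate step is harmless but unnecessary, and your remark about $\delta$ lying in the image of $\underline{\boldsymbol{\iota}}_a$ is not needed for the statement as written.
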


\begin{proof}
We prove the conclusion for $\ast = a$, and the proof of the other case is similar.

(1): This follows immediately from the fact that $\boldsymbol{\pi}_a \circ \boldsymbol{\iota}_a \cong \ID_{\C}$.

(2): Using essentially the same proof as that of \cite[Lemma 6.1]{Li}, one can show that the kernel, as a $k$-module, is spanned by elements of the form $\alpha - \beta$, where $\alpha$ and $\beta$ are morphisms sharing the same source $[m]$ and target $[n]$ such that $\alpha$ and $\beta$ restricted to $[m] \setminus \{1\}$ define the same map. Clearly, $\alpha_{m, 1} - \alpha_{m, 2}$ is contained in the kernel, so the left ideal generated by these elements is contained in the kernel.

To show the inclusion of the other direction, let us consider the above mentioned $\alpha - \beta$. Clearly, we can assume that $\alpha \neq \beta$, and hence $\alpha(1) \neq \beta(1)$. Without loss of generality suppose that $\alpha(1) > \beta(1)$. Then one can write $\alpha = \gamma \circ \alpha_{m, 1}$ and $\beta = \gamma \circ \alpha_{m, 2}$, where $\gamma: [m+1] \to [n]$ is defined by $\gamma(1) = \beta(1)$, $\gamma(2) = \alpha(1)$, and $\gamma(i) = \alpha(i) = \beta(i)$ for $i \geqslant 3$. Thus $\alpha - \beta$ is contained in the left ideal generated by the set $\{ \alpha_{m, 1} - \alpha_{m,2} \mid m \geqslant 1\}$ as claimed.
\end{proof}

\begin{proposition}  \label{prop:B triple}
Notation as before. Then one has:
\begin{enumerate}
\item $\BB_{\ast} M(n) \cong M(n+1)/\mi_{\ast} M(n+1) = M(n+1)/ \mi_{\ast} e_{n+1}$.
\item $(\BD_{\ast}, \, \BB_{\ast}, \, \BS_{\ast} \BK_{\ast})$ is a triple of adjoint functors.
\end{enumerate}
\end{proposition}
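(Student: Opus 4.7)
Part (1) is a direct algebraic identification. Since $\mi_\ast$ is the kernel of the surjection $\varpi_\ast : k\C_+ \to k\C$ and $\varpi_\ast(e_{n+1}) = e_n$, one has $M(n+1)/\mi_\ast e_{n+1} = (k\C_+/\mi_\ast)\,e_{n+1} \cong k\C\,e_n = M(n)$ as $k\C$-modules, and viewed as a $k\C_+$-module via $\varpi_\ast$ this right-hand side is by definition $\BB_\ast M(n)$. Equivalently, one argues componentwise: a basis of $(M(n+1)/\mi_\ast e_{n+1})_m$ consists of morphisms $\alpha\in\C([n+1],[m])$ with $\alpha(1)=1$ (for $\ast=a$) or $\alpha(n+1)=m$ (for $\ast=b$), and removing the fixed coordinate gives a bijection with $(\BB_\ast M(n))_m = M(n)_{m-1}$ compatible with the $\C_+$-action.

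For part (2), we establish the two adjunctions $(\BD_\ast,\BB_\ast)$ and $(\BB_\ast,\BS_\ast\BK_\ast)$ separately using the right-adjoint construction of Subsection~\ref{subsection:right adjoint}. Both $\BD_\ast$ and $\BB_\ast$ are right exact and preserve direct sums ($\BB_\ast$ is in fact exact), so each admits a right adjoint $F^\dag$ with $(F^\dag W)_n = \Hom_{k\C}(FM(n),W)$. For $F = \BD_\ast$, the formula $\BD_\ast V \cong \BS_\ast(V/\mi_\ast V)$ combined with part~(1) and the identity $\BS_\ast\BB_\ast \cong \ID_{\C\Module}$ from the preceding lemma gives
\[ \BD_\ast M(n) \;\cong\; \BS_\ast\BB_\ast M(n-1) \;\cong\; M(n-1) \]
for $n\geqslant 1$, and $\BD_\ast M(0)=0$; hence $(\BD_\ast^\dag W)_n \cong W_{n-1} = (\BB_\ast W)_n$. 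For $F = \BB_\ast$, part~(1) gives directly
\[ (\BB_\ast^\dag W)_n \;\cong\; \Hom_{k\C}(M(n+1)/\mi_\ast e_{n+1},W) \;\cong\; \{w\in W_{n+1} : \mi_\ast w = 0\} \;=\; (\BS_\ast\BK_\ast W)_n, \]
via the evaluation $\phi\mapsto\phi(e_{n+1})$.

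The remaining task, and the only real obstacle, is to verify that these pointwise $k$-linear bijections are natural with respect to the $\C$-module structures. For each $\alpha:[m]\to[n]$ in $\C$ the transition map $\phi\mapsto\phi\circ F(\rho_\alpha)$ prescribed by Subsection~\ref{subsection:right adjoint} must correspond, under the above identifications, to the action of $\alpha$ on $\BB_\ast W$ (respectively on $\BS_\ast\BK_\ast W$). For $(\BB_\ast,\BS_\ast\BK_\ast)$ this follows immediately: under part~(1) the map $\BB_\ast(\rho_\alpha)$ sends $\bar e_{n+1}$ to $\overline{\boldsymbol{\iota}_\ast(\alpha)}$, so the image of $\phi\leftrightarrow w$ is $\boldsymbol{\iota}_\ast(\alpha)\cdot w$, which is exactly the action on $\BS_\ast\BK_\ast W$ defined via $\boldsymbol{\iota}_\ast$. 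For $(\BD_\ast,\BB_\ast)$ one traces the identification $\BD_\ast M(n)\cong M(n-1)$ to show that $\BD_\ast(\rho_\alpha)$ corresponds to $\rho_{\alpha_\ast}$ when $\alpha$ preserves the extremal coordinate and to zero otherwise, matching the action on $\BB_\ast W$ via $\boldsymbol{\pi}_\ast$. These naturality checks are mechanical once part~(1) and the preceding lemma are in hand.
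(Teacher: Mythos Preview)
Your proof is correct and largely parallels the paper's. Part~(1) and the adjunction $(\BD_\ast,\BB_\ast)$ are handled essentially identically: the paper also computes $\BD_a M(n)\cong M(n-1)$ and then verifies that the induced $\C_+$-module structure on $(\BD_\ast^\dag V)_n \cong V_{n-1}$ matches $\BB_\ast V$ via the same case split on whether $\alpha$ fixes the extremal element.

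The one genuine difference is in the adjunction $(\BB_\ast,\BS_\ast\BK_\ast)$. You establish it via the $F^\dag$ construction of Subsection~\ref{subsection:right adjoint}, computing $\Hom_{k\C}(\BB_\ast M(n),W)$ from part~(1) and then checking naturality. The paper instead uses the tensor--hom adjunction directly: writing $\BB_a V \cong k\C_+/\mi_a \otimes_{k\C} V$, one immediately gets $\Hom_{k\C_+}(\BB_a V,W)\cong \Hom_{k\C}(V,\bigoplus_n\Hom_{k\C_+}(M(n)/\mi_a M(n),W))\cong \Hom_{k\C}(V,\mathrm{Ann}_{\mi_a}(W))$, and recognizes the right-hand side as $\Hom_{k\C}(V,\BS_a\BK_a W)$. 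The paper's route avoids the pointwise naturality check by packaging it into the standard tensor--hom machinery; your route is more uniform with the treatment of $(\BD_\ast,\BB_\ast)$ and makes the identification $(\BB_\ast^\dag W)_n\cong(\BK_\ast W)_{n+1}$ explicit. Both arguments are short and neither has a real advantage over the other.
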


\begin{proof}
We prove the proposition for $\ast = a$. The first statement holds by a clear isomorphism
\[
\C([n], [n+r]) \ni \alpha \mapsto f \in \C([n+1], [n+1+r]
\]
where we identity $f$ with its image in the quotient algebra, and let $f(1) = 1$, $f(i) = \alpha(i-1) + 1$ for $i \in [n+1] \setminus \{1\}$.

To show that the right adjoint of $\BB_a$ is $\BS_a \BK_a$, we have
\begin{align*}
\Hom_{k\C_+} (\BB_a V, W) & \cong \Hom_{k\C_+}(k\C_+/\mi_a \otimes_{k\C} V, W)\\
 & \cong \Hom_{k\C} (V, \bigoplus_{n \geqslant 1} \Hom_{k\C_+}(M(n)/\mi_a M(n), W))\\
 & \cong \Hom_{k\C} (V, \mathrm{Ann}_{\mi_a}(W))\\
 & \cong \Hom_{k\C} (V, \bigoplus_{n \geqslant 1} (\BK_a W)_n)
\end{align*}
where the second isomorphism comes from the tensor-hom adjunction which also applies to our case (see \cite[Definition 4.1 and Lemma 4.2]{GL1}). Note that the $k\C_+$-module $\bigoplus_{n \geqslant 1} (\BK_a V)_n$ is viewed as a $k\C$-module via the isomorphism $k\C \cong k\C_+/\mi_a$, and hence is precisely $\BS_a \BK_aV$ as a $\C$-module. Therefore, $\BS_a \BK_a$ is the right adjoint of $\BB_a$.

The right adjoint of $\BD_a$ on a $\C$-module $V$ is the $\C_+$-module whose value on $[n]$, $n \geqslant 1$, is:
\[ \Hom_{k\C} (\BD_a (M(n)), V ) \cong \Hom_{k\C} (M(n-1), V) \cong V_{n-1}. \]
If $\alpha: [n] \to [n']$ is a morphism in $\C$, then we get a morphism $\tilde{\alpha}: M(n') \to M(n)$ where $\beta \mapsto \beta \alpha$, and hence a morphism $\BD_a(\tilde{\alpha}): \BD_a M(n') \to \BD_a M(n)$. If we interpret this as a morphism $M(n'-1) \to M(n-1)$, then it is zero if $\alpha(1) > 1$, otherwise if $\alpha(1)=1$ it is the morphism $\beta \mapsto \beta \alpha'$ where
\[
\alpha': [n-1] \to [n'-1] , \, i \mapsto \alpha(i+1)-1.\]
Thus the induced map
\[
 \Hom_{k\C} ( \BD_a (M(n)), V ) \to \Hom_{k\C} (\BD_a (M(n')), V ) \]
is identified with the map $V_{n-1} \to V_{n'-1}$ where $v \mapsto \alpha' v$. It follows that the right adjoint of $\BD_a$ is $\BB_a$.
\end{proof}

\begin{corollary}
One has $\BD_{\ast} \boldsymbol{\Gamma}_{\ast} \cong \ID_{\C \Module}$.
\end{corollary}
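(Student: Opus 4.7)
The plan is to deduce the identification formally from the adjunction relations and the identity $\BS_{\ast}\BB_{\ast}\cong \ID_{\C\Module}$ already established. By Proposition \ref{prop:B triple}, the derivative $\BD_{\ast}: \C_+\Module \to \C\Module$ is left adjoint to $\BB_{\ast}: \C\Module \to \C_+\Module$, and by the proposition on $\boldsymbol{\Psi}_{\ast}$ and $\boldsymbol{\Gamma}_{\ast}$ (or rather, the preceding proposition giving the adjunction $\boldsymbol{\Gamma}_{\ast}\dashv \BS_{\ast}$), the functor $\boldsymbol{\Gamma}_{\ast}:\C\Module \to \C_+\Module$ is left adjoint to $\BS_{\ast}:\C_+\Module \to \C\Module$. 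Composing these two adjunctions shows that $\BD_{\ast}\boldsymbol{\Gamma}_{\ast}$ is left adjoint to $\BS_{\ast}\BB_{\ast}$ as endofunctors of $\C\Module$.

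Next, I would invoke part (1) of the lemma in Subsection 3.4, which states $\BS_{\ast}\circ \BB'_{\ast}\cong \ID_{\C\Module}$, together with the identification $\BB_{\ast}\cong \BB'_{\ast}$ established via the algebra isomorphism $k\C_+/\mi_{\ast}\cong k\C$. This yields $\BS_{\ast}\BB_{\ast}\cong \ID_{\C\Module}$. Since the identity functor is its own left adjoint, and left adjoints are unique up to natural isomorphism, the conclusion $\BD_{\ast}\boldsymbol{\Gamma}_{\ast}\cong \ID_{\C\Module}$ follows immediately.

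There is no real obstacle here: the statement is a purely formal consequence of the adjunctions assembled in this section together with the trivial computation $\BS_{\ast}\BB_{\ast}\cong \ID_{\C\Module}$. If instead one preferred a direct combinatorial verification, the approach would be to compute
\[
(\BD_{\ast}\boldsymbol{\Gamma}_{\ast}V)_n \;=\; (\boldsymbol{\Gamma}_{\ast}V)_{n+1}\big/(\mi_{\ast}\boldsymbol{\Gamma}_{\ast}V)_{n+1}
\]
and check that in the decomposition $(\boldsymbol{\Gamma}_{\ast}V)_{n+1} = V_n\oplus V_{n-1}\oplus\cdots\oplus V_0$, the image $(\mi_{\ast}\boldsymbol{\Gamma}_{\ast}V)_{n+1}$ is precisely the sum of all summands except the first (using that every generator $\boldsymbol{\rho}^{\ast}_i$ of $\mi_{\ast}$ shifts the indexing in $\boldsymbol{\Gamma}_{\ast}V$ so as to avoid the leading summand, while covering all the other summands); but this would merely reprove the adjunction-theoretic argument with more effort.
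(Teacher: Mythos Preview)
Your proof is correct and is essentially the same argument as the paper's: the paper writes out the chain of Hom-set isomorphisms $\Hom(V,W)\cong\Hom(V,\BS_{\ast}\BB_{\ast}W)\cong\Hom(\boldsymbol{\Gamma}_{\ast}V,\BB_{\ast}W)\cong\Hom(\BD_{\ast}\boldsymbol{\Gamma}_{\ast}V,W)$ explicitly, while you phrase the same thing as ``compose the adjunctions $\boldsymbol{\Gamma}_{\ast}\dashv\BS_{\ast}$ and $\BD_{\ast}\dashv\BB_{\ast}$, then use $\BS_{\ast}\BB_{\ast}\cong\ID$.'' The extra remark about a direct combinatorial verification is unnecessary but harmless.
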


\begin{proof}
Let $V$ and $W$ be two $\C$-modules. Since $\BS_{\ast} \BB_{\ast} W \cong W$, one shall have
\begin{align*}
\Hom_{k\C} (V, W) & \cong \Hom_{k\C} (V, \BS_{\ast} \BB_{\ast}W)\\
 & \cong \Hom_{k\C_+} (\boldsymbol{\Gamma}_{\ast} V, \BB_{\ast}W) \\
 & \cong \Hom_{k\C} (\BD_{\ast} \boldsymbol{\Gamma}_{\ast} V, W),
\end{align*}
that is, $\BD_{\ast} \boldsymbol{\Gamma}_{\ast}$ is the left adjoint of $\ID_{\C \Module}$. The conclusion then follows from the uniqueness of left adjoint functors.
\end{proof}

\subsection{The right adjoint of $\BS_{\ast}$}
By Section \ref{subsection:right adjoint}, the shift functor $\BS_{\ast}$ has a right adjoint functor $\BS_\ast^\dag$. Below, we prove that $\BS_\ast^\dag$ is isomorphic to a functor $\BQ_\ast$ which has a more explicit description. (The functor $\BQ_{\ast}$ is essentially the same as the coinduction functor defined in \cite[Subsection 13.1]{GS}.)

For any morphism $\alpha: [n]\to [\ell]$ such that $\alpha(1)>1$, define a morphism $\partial_a \alpha : [n]\to [\ell-1]$ by $\partial_a\alpha (i) = \alpha(i)-1$ for each $i\in [n]$. Similarly, for any morphism $\alpha: [n]\to [\ell]$
such that $\alpha(n)<\ell$, define a morphism $\partial_b \alpha : [n]\to [\ell-1]$ by $\partial_b\alpha (i) = \alpha(i)$ for each $i\in [n]$.

For any $\C$-module $V$, we define a $\C$-module $\BQ_\ast V$ in the following way. For each $n\in \N$, let
\[ (\BQ_\ast V)_n = V_n \oplus V_{n-1}, \]
where we set $V_{-1}=0$.
For each morphism $\alpha: [n]\to [\ell]$ in $\C$, we define the induced map $(\BQ_a V)_n \to (\BQ_a V)_\ell$ to be the map
\begin{align*}
V_n \oplus V_{n-1} &\to V_\ell \oplus V_{\ell-1},\\
(v, w) &\mapsto
\left\{  \begin{array}{ll}
(\alpha(v),\, \alpha_a (w)  )  & \mbox{ if } \alpha(1)=1,\\
(\alpha(v),\, \partial_a \alpha (v))  & \mbox{ if } \alpha(1)>1;
\end{array} \right.
\end{align*}
similarly, we define the induced map $(\BQ_b V)_n \to (\BQ_b V)_\ell$ to be the map
\begin{align*}
V_n \oplus V_{n-1} &\to V_\ell \oplus V_{\ell-1},\\
(v, w) &\mapsto
\left\{  \begin{array}{ll}
(\alpha(v),\, \alpha_b (w)  )  & \mbox{ if } \alpha(n)=\ell,\\
(\alpha(v),\, \partial_b \alpha (v))  & \mbox{ if } \alpha(n)<\ell.
\end{array} \right.
\end{align*}
We remind the reader that $\alpha_a : [n-1]\to [\ell-1]$
and $\alpha_b : [n-1]\to [\ell-1]$
are defined by $\alpha_a (i)=\alpha(i+1)-1$
and $\alpha_b(i) = \alpha(i)$
for each $i\in [n-1]$.

It can be checked directly that the above construction defines a $\C$-module $\BQ_* V$ and hence a functor $\BQ_*: \C\Module \to \C\Module$, but this also follows from the proof of the following proposition which identifies $\BQ_*$ with the right adjoint functor $\BS_*^\dag$ of $\BS_*$.

\begin{proposition} \label{prop:Q}
One has:
\begin{enumerate}
\item $\BQ_{\ast}$ is the right adjoint of $\BS_{\ast}$.
\item For any $\C$-module $V$, there is a short exact sequence
\[
0 \to \BB_{\ast} V \to \BQ_{\ast} V \to V \to 0.
\]
In particular, one has $\BQ_{\ast} M(n) \cong M(n) \oplus M(n+1)/\mi_{\ast}M(n+1)$.

\item $\BB_{\ast} \cong \BK_{\ast} \BQ_{\ast}$.
\end{enumerate}
\end{proposition}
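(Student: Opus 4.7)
The plan is to treat the three claims in turn, working with $\ast = a$ (the case $\ast = b$ is symmetric).

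For (1), the first step is to decompose $\BS_a M(n)$. Since $(\BS_a M(n))_m = k\C([n], [m+1])$, I split each basis set according to whether $\alpha(1) = 1$ or $\alpha(1) > 1$: the first subset is in bijection with $k\C([n-1], [m])$ via $\alpha \leftrightarrow \alpha_a$ and the second with $k\C([n], [m])$ via $\alpha \leftrightarrow \partial_a\alpha$. Since post-composition with $\boldsymbol{\iota}_a(\gamma)$ preserves each case, these subsets assemble into a $\C$-module isomorphism $\BS_a M(n) \cong M(n-1) \oplus M(n)$ for $n \geqslant 1$ (only the second summand appears for $n = 0$). The right-adjoint recipe of Subsection~\ref{subsection:right adjoint} then yields $(\BS_a^\dag V)_n \cong V_{n-1} \oplus V_n$. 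I finish by computing, for any $\gamma: [n] \to [n']$, how $\BS_a(\rho_\gamma)$ restricts to each summand: the $M(n'-1)$ summand maps to $M(n-1)$ via $\rho_{\gamma_a}$ when $\gamma(1) = 1$ and to $M(n)$ via $\rho_{\partial_a\gamma}$ when $\gamma(1) > 1$, while the $M(n')$ summand always maps to $M(n)$ via $\rho_\gamma$. Dualizing through $\Hom_{k\C}(M(r), V) \cong V_r$ recovers the explicit formulas defining $\BQ_a V$ (after swapping the order of the two summands), so $\BS_a^\dag \cong \BQ_a$.

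For (2), I define $\BB_a V \to \BQ_a V$ by $w \mapsto (0, w)$ and $\BQ_a V \to V$ by $(v, w) \mapsto v$, and verify both are $\C$-module maps directly from the action formulas: for $\alpha(1) = 1$ the two coordinates of $\BQ_a$ transform independently by $\alpha$ and $\alpha_a$, matching the $V$- and $\BB_a$-actions, while for $\alpha(1) > 1$ the second coordinate of $\alpha\cdot (0, w)$ is $\partial_a\alpha(0) = 0$ and the first coordinate of $\alpha\cdot(v,w)$ is just $\alpha(v)$. Exactness of $0 \to \BB_a V \to \BQ_a V \to V \to 0$ is then immediate. For the special case $V = M(n)$, the projectivity of $M(n)$ makes the sequence split, and Proposition~\ref{prop:B triple}(1) identifies $\BB_a M(n)$ with $M(n+1)/\mi_a M(n+1)$, giving the claimed direct sum decomposition.

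For (3), I apply the left exact functor $\BK_a$ to the sequence in (2). Every morphism in $\mi_a$ has $\alpha(1) \neq 1$ and therefore acts by zero on $\BB_a V$, so $\mi_a$ annihilates $\BB_a V$ and $\BK_a \BB_a V = \BB_a V$; it remains to see that the induced map $\BK_a \BQ_a V \to \BK_a V$ is zero. For $(v, w) \in (\BK_a \BQ_a V)_n$, the formula for the $\BQ_a$-action combined with $\partial_a \alpha_{n, 1} = e_n$ gives $\alpha_{n, 1} \cdot (v, w) = (\alpha_{n, 1}(v), v)$, and the annihilation condition forces $v = 0$; hence the map to $\BK_a V$ vanishes and the inclusion $\BB_a V \hookrightarrow \BK_a \BQ_a V$ is an isomorphism.

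The main technical hurdle is the bookkeeping in part (1), where the direct sum decomposition of $\BS_a M(n)$ interacts non-trivially with each $\rho_\gamma$ (the two summands mix whenever $\gamma(1) > 1$); once that is handled correctly, parts (2) and (3) follow from short direct verifications.
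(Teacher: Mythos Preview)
Your proposal is correct and follows essentially the same approach as the paper: the decomposition $\BS_a M(n)\cong M(n)\oplus M(n-1)$ via the case split $\alpha(1)=1$ versus $\alpha(1)>1$, the tracking of $\BS_a(\rho_\gamma)$ on summands, the explicit maps $w\mapsto(0,w)$ and $(v,w)\mapsto v$ for the short exact sequence, and the key computation $\alpha_{n,1}\cdot(v,w)=(\alpha_{n,1}(v),v)$ in part (3) all match the paper's argument. Your framing of (3) through applying the left exact functor $\BK_a$ to the short exact sequence is a minor repackaging of the paper's direct identification of the image with $\BK_a\BQ_a V$, but the substance is identical.
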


\begin{proof}
We prove the proposition for $\ast = a$.

(1): Since $\BS_a$ is right exact and transforms direct sums to direct sums, it has a right adjoint functor $\BS_a^\dag$ by Section \ref{subsection:right adjoint}. We show that $\BQ_a$ is isomorphic to $\BS_a^\dag$.

First, for each $n\in\N$, one has an isomorphism
\[ f: \BS_a M(n) \to M(n) \oplus M(n-1), \]
where we set $M(-1)=0$; if $\alpha: [n]\to [\ell+1]$ is a morphism in $(\BS_a M(n))_\ell$, then
\[ f(\alpha) = \left\{
\begin{array}{ll}
\alpha_a \in M(n-1)_\ell & \mbox{ if } \alpha(1)=1,\\
\partial_a \alpha \in M(n)_\ell & \mbox{ if } \alpha(1)>1.
\end{array}
\right. \]

Next, for each $n\in\N$, one has a $k$-module isomorphism
\[  g: V_n \oplus V_{n-1} \to \Hom_{k\C} (M(n)\oplus M(n-1), V ) \]
defined by $g: (v,\, w) \mapsto \phi$ where $\phi(\alpha_1, \alpha_2) = \alpha_1(v) + \alpha_2(w)$ for any $\alpha_1 \in M(n)_r$ and $\alpha_2 \in M(n-1)_r$, for any $r$.

Therefore we have the $k$-module isomorphisms
\[ (\BQ_a V)_n = V_n \oplus V_{n-1} \cong \Hom_{k\C} (M(n)\oplus M(n-1), V )
\cong \Hom_{k\C} (\BS_a M(n), V) = (\BS_a^\dag V)_n. \]
We claim that these isomorphisms are compatible with the $\C$-module structures of $\BQ_a V$ and $\BS_a^\dag V$.

Let $\alpha: [n]\to [\ell]$ be any morphism in $\C$. Following Section \ref{subsection:right adjoint}, we have $\rho_\alpha: M(\ell) \to M(n)$ and hence the homomorphism
$\BS_a (\rho_\alpha) : \BS_a M(\ell) \to \BS_a M(n)$, which we identify with the homomorphism
\begin{align*}
h: M(\ell)\oplus M(\ell-1) &\to M(n)\oplus M(n-1), \\
(\beta_1, \, \beta_2) &\mapsto  \left\{ \begin{array}{ll}
(\beta_1 \circ \alpha, \, \beta_2 \circ \alpha_a ) & \mbox{ if } \alpha(1)=1, \\
(\beta_1 \circ \alpha + \beta_2 \circ \partial_a \alpha , \, 0)& \mbox{ if } \alpha(1)>1.
\end{array}\right.
\end{align*}
Now consider any $(v,\, w)\in V_n \oplus V_{n-1}$ and let $\phi=g(v,w)$ as above, so $\phi : M(n)\oplus M(n-1) \to V$ sends $(\alpha_1, \alpha_2)$ to $\alpha_1(v)+\alpha_2(w)$. The composition of the above homomorphism $h$ with $\phi$ is
\[ \phi(h(\beta_1, \beta_2)) = \left\{ \begin{array}{ll}
\beta_1 (\alpha(v)) + \beta_2 (\alpha_a (w))  & \mbox{ if } \alpha(1)=1, \\
\beta_1 (\alpha(v)) + \beta_2 (\partial_a \alpha (v)) & \mbox{ if } \alpha(1)>1.
\end{array}\right.
 \]
It follows that the $\C$-modules $\BQ_a V$ and $\BS_a^\dag V$ are isomorphic as claimed.

(2): Define a homomorphism $\BB_\ast V \to \BQ_\ast V$ by sending
\[ w\in (\BB_\ast V)_n = V_{n-1} \qquad \mbox{ to } \qquad (0, w) \in V_n \oplus V_{n-1} = (\BQ_\ast V)_n. \]
Define a homomorphism $\BQ_\ast V \to V$ by sending
\[ (v, w) \in V_n \oplus V_{n-1} = (\BQ_\ast V)_n \qquad \mbox{ to } \qquad v \in V_n.\]
It is clear that these give a short exact sequence $0\to\BB_\ast V \to \BQ_\ast V \to V\to 0 $.

By Proposition \ref{prop:B triple}, we have
$\BB_\ast M(n) \cong M(n+1)/\mi_{\ast}M(n+1)$. Since $M(n)$ is a projective $\C$-module, it follows from the short exact sequence that we have
$\BQ_{\ast} M(n) \cong M(n) \oplus M(n+1)/\mi_{\ast}M(n+1)$.

(3): Let $\alpha:[n]\to [n+1]$ be the map $\alpha(i)=i+1$ for every $i\in [n]$.
Then $\partial_a \alpha$ is the identity map on $[n]$. Hence, for any
$(v,w)\in V_n \oplus V_{n-1} = (\BQ_a V)_n$, we have $\alpha(v,w)=(\alpha(v), v)$; therefore $\alpha(v,w)=0$ if and only if $v=0$. It follows that the image of the above injective map $\BB_a V \to \BQ_a V$ is precisely $\BK_a \BQ_a V$, hence $\BB_a V \cong\BK_a \BQ_a V $.
\end{proof}

One can also prove $\BK_{\ast} \BQ_{\ast} \cong \BB_{\ast}$ using the tensor hom interpretations of these functors. Firstly, note that
\[
\BK_{\ast} \cong \mathrm{Ann}_{\mi_{\ast}}(-) \cong \bigoplus_{n \geqslant 1} \Hom_{k\C} (M(n)/\mi_{\ast} M(n), -).
\]
Therefore, via the tensor-hom adjunction (which also holds in our situation via a suitable modification), we see that $(k\C/\mi_{\ast} \otimes_{k\C} -, \BK_{\ast})$ is an adjoint pair. But we also know that $\BD_{\ast} \cong \BS_{\ast} (k\C/\mi_{\ast} \otimes_{k\C} -)$. Thus the right adjoint of $\BD_{\ast}$ is $\BK_{\ast} \BQ_{\ast}$. By Proposition \ref{prop:B triple}, we have $\BK_{\ast} \BQ_{\ast} \cong \BB_{\ast}$.

\subsection{Right adjoint of $\BQ_{\ast}$}
For any morphism $\alpha: [n]\to [\ell]$ such that $\alpha(1)>1$, define a morphism
$\partial^a \alpha: [n+1]\to [\ell]$ by
\[ \partial^a \alpha(i) = \left\{ \begin{array}{ll}
1  & \mbox{ if } i=1,\\
\alpha(i-1) & \mbox{ if }i>1.
\end{array}\right. \]
Similarly, if $\alpha:[n]\to [\ell]$ and $\alpha(n)<\ell$, define $\partial^b \alpha: [n+1]\to \ell$ by
\[ \partial^b \alpha(i) = \left\{ \begin{array}{ll}
\alpha(i)  & \mbox{ if } i\leqslant n,\\
\ell & \mbox{ if }i=n+1.
\end{array}\right. \]

For any $\C$-module $V$, we define a $\C$-module $\BR_\ast V$ in the following way. For each $n\in \N$, let
\[ (\BR_\ast V)_n = V_n \oplus (\BK_* V)_{n+1}.  \]
Note that $(\BK_* V)_{n+1}\subseteq V_{n+1}$.
For each morphism $\alpha: [n]\to [\ell]$ in $\C$, we define the induced map
$(\BR_a V)_n \to (\BR_a V)_\ell$ to be the map
\begin{align*}
V_n \oplus (\BK_a V)_{n+1} &\to V_\ell \oplus (\BK_a V)_{\ell+1},\\
(v, w) &\mapsto   \left\{ \begin{array}{ll}
 (\alpha(v),\, \boldsymbol{\iota}_a (\alpha)(w)) & \mbox{ if } \alpha(1)=1,\\
 (\alpha(v) - \partial^a \alpha (w),\, \boldsymbol{\iota}_a (\alpha)(w) ) & \mbox{ if } \alpha(1)>1;
\end{array}\right.
\end{align*}
similarly for the induced map $(\BR_b V)_n \to (\BR_b V)_\ell$.

It can be checked directly that the above construction defines a $\C$-module $\BR_* V$ and hence a functor $\BR_*: \C\Module \to \C\Module$, but this also follows from the proof of the following proposition which identifies $\BR_*$ with the right adjoint functor $\BQ_*^\dag$ of $\BQ_*$.

\begin{proposition}
Notations as above. Then one has:
\begin{enumerate}
\item $\BR_{\ast}$ is the right adjoint functor of $\BQ_{\ast}$.
\item There is a short exact sequence $0 \to V \to \BR_{\ast}V \to \BS_{\ast} \BK_{\ast}V \to 0$.
\end{enumerate}
\end{proposition}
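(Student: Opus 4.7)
The plan is to mirror the strategy used throughout Section 3: for part (1), invoke the right-adjoint construction of Subsection \ref{subsection:right adjoint} to produce $\BQ_\ast^\dag$ abstractly, then identify it with $\BR_\ast$ via the explicit formula for $\BQ_\ast M(n)$ given in Proposition \ref{prop:Q}(2); for part (2), write down the two natural maps on the nose and verify exactness from the direct-sum decomposition of $\BR_\ast V$.

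For part (1), I first note that $\BQ_\ast$ is exact and preserves arbitrary direct sums, since $(\BQ_\ast V)_n = V_n \oplus V_{n-1}$ at the level of underlying $k$-modules. Subsection \ref{subsection:right adjoint} therefore yields a right adjoint $\BQ_\ast^\dag$ with $(\BQ_\ast^\dag V)_n = \Hom_{k\C}(\BQ_\ast M(n), V)$. Combining $\BQ_\ast M(n)\cong M(n) \oplus M(n+1)/\mi_\ast M(n+1)$ from Proposition \ref{prop:Q}(2) with the standard identification $\Hom_{k\C}(M(n),V)\cong V_n$ and the formula $\Hom_{k\C}(k\C e_{n+1}/\mi_\ast e_{n+1}, V)\cong (\BK_\ast V)_{n+1}$ coming from Proposition 3.2(2), I get a natural $k$-linear bijection $(\BQ_\ast^\dag V)_n \cong V_n \oplus (\BK_\ast V)_{n+1} = (\BR_\ast V)_n$.

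The heart of part (1) is checking that this bijection is $\C$-equivariant. For a morphism $\alpha:[n]\to[\ell]$, the induced map $(\BQ_\ast^\dag V)_n\to(\BQ_\ast^\dag V)_\ell$ is $\phi\mapsto \phi\circ \BQ_\ast(\rho_\alpha)$, so I need to compute $\BQ_\ast(\rho_\alpha)$ after the identification of Proposition \ref{prop:Q}(2). I plan to trace $(\beta_1,\beta_2)\in M(\ell)_r\oplus M(\ell)_{r-1}$ through $\BQ_\ast(\rho_\alpha)$ and compare with the explicit formulas for the $\BR_\ast$-action, splitting into the two cases $\alpha(1)=1$ and $\alpha(1)>1$. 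When $\alpha(1)=1$ the generator of the second summand is preserved by $\boldsymbol{\iota}_\ast(\alpha)$ and one gets the clean action $(v,w)\mapsto (\alpha(v),\boldsymbol{\iota}_\ast(\alpha)(w))$; when $\alpha(1)>1$, the morphism $\boldsymbol{\iota}_\ast(\alpha)$ no longer preserves the distinguished element, and the component of $\BQ_\ast(\rho_\alpha)$ landing in the first summand produces precisely the correction term $\partial^a\alpha(w)$ in the formula for $\BR_a$. Extracting this correction sign and verifying it matches the stated formula is the main technical obstacle.

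For part (2), I would define $\eta_V: V\to \BR_\ast V$ by $v\mapsto(v,0)$ and $\pi_V:\BR_\ast V\to \BS_\ast\BK_\ast V$ by $(v,w)\mapsto w$, using that $(\BS_\ast\BK_\ast V)_n = (\BK_\ast V)_{n+1}$. That $\eta_V$ is a $\C$-module map is automatic: plugging $w=0$ into the $\BR_\ast$-formula kills the correction term $\partial^a\alpha(w)$ in both cases, giving $(v,0)\mapsto(\alpha(v),0)$. For $\pi_V$, in both cases of the definition of $\BR_a$ the second coordinate evolves as $w\mapsto\boldsymbol{\iota}_\ast(\alpha)(w)$, which is by definition the action on $(\BS_\ast\BK_\ast V)_n$. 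Exactness of $0\to V\to \BR_\ast V\to \BS_\ast\BK_\ast V\to 0$ is then immediate from $(\BR_\ast V)_n = V_n\oplus(\BK_\ast V)_{n+1}$ at the level of $k$-modules. No real obstacle here; part (2) is essentially a consistency check on the definition of $\BR_\ast$.
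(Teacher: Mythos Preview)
Your proposal is correct and follows essentially the same route as the paper: both invoke the abstract right-adjoint construction of Subsection~\ref{subsection:right adjoint}, identify $(\BQ_\ast^\dag V)_n$ with $V_n\oplus(\BK_\ast V)_{n+1}$ via Proposition~\ref{prop:Q}(2), and then verify $\C$-equivariance by computing $\BQ_\ast(\rho_\alpha)$ in the two cases $\alpha(1)=1$ and $\alpha(1)>1$ (the paper traces the generators $(e_\ell,0)$ and $(0,e_\ell)$ rather than a general $(\beta_1,\beta_2)$, but this is the same computation). Your treatment of part~(2) is more explicit than the paper's one-line ``immediate from the definition of $\BR_\ast$,'' but otherwise identical in content.
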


\begin{proof}
We prove the proposition for $\ast = a$.

(1): Since $\BQ_a$ is right exact and transforms direct sums to direct sums, it has a right adjoint functor $\BR_a^\dag$ by Section \ref{subsection:right adjoint}. We show that $\BR_a$ is isomorphic to $\BQ_a^\dag$.

Let $V$ be a $\C$-module.
By Proposition \ref{prop:Q}(2), we have an isomorphism
\[ \BQ_a M(n) \cong M(n) \oplus M(n+1)/\mi_a M(n+1), \]
hence we have an isomorphism
\[ (\BQ_a^\dag V)_n = \Hom_{k\C}( \BQ_a M(n), V) \cong V_n \oplus (\BK_a V)_{n+1} = (\BR_a V)_n. \]
Let us choose this isomorphism so that a homomorphism $\phi: \BQ_a M(n)\to V$ is identified with
\[ (\phi(e_n,0),\, \phi(0, e_n)) \in V_n \oplus (\BK_a V)_{n+1}, \]
where
\begin{gather*}
(e_n,0) \in M(n)_n \oplus M(n)_{n-1} = (\BQ_a M(n))_n,\\
(0, e_n) \in M(n)_{n+1} \oplus M(n)_n = (\BQ_a M(n))_{n+1}.
\end{gather*}
We claim that this identification is compatible with the $\C$-module structures of $\BQ_a^\dag V$ and $\BR_a V$.

Let $\alpha: [n]\to [\ell]$ be any morphism in $\C$. Following Section \ref{subsection:right adjoint}, we have $\rho_\alpha: M(\ell) \to M(n)$ and hence the homomorphism
$\BQ_a (\rho_\alpha) : \BQ_a M(\ell) \to \BQ_a M(n)$. The homomorphism $\BQ_a (\rho_\alpha)$ sends
\begin{align*}
(e_\ell, 0)\in M(\ell)_\ell \oplus M(\ell)_{\ell-1} \quad&\mbox{ to }\quad
(\alpha, 0 ) \in M(n)_\ell \oplus M(n)_{\ell-1},\\
(0, e_\ell)\in M(\ell)_{\ell+1} \oplus M(\ell)_{\ell} \quad&\mbox{ to }\quad
(0, \alpha) \in M(n)_{\ell+1} \oplus M(n)_{\ell} .
\end{align*}
From the definition of the map $(\BQ_a M(n))_n \to (\BQ_a M(n))_\ell$ induced by $\alpha$, we have:
\[ (\alpha, 0 ) = \left\{ \begin{array}{ll}
\alpha (e_n, 0) & \mbox{ if } \alpha(1)=1,\\
\alpha (e_n, 0) - \partial^a \alpha(0, e_n) & \mbox{ if } \alpha(1)>1.
\end{array}\right. \]
From the definition of the map $(\BQ_a M(n))_{n+1} \to (\BQ_a M(n))_{\ell+1}$ induced by $\boldsymbol{\iota}_a (\alpha)$, we have:
\[
(0, \alpha) = \boldsymbol{\iota}_a (\alpha) (0, e_n).
\]

Consider any $(v, w)\in V_n \oplus (\BK_a V)_{n+1}$. Let  $\phi: \BQ_a M(n)\to V$ be the homomorphism such that $\phi(e_n,0)=v$ and $\phi(0,e_n)=w$. Then we have:
\[ (\phi \circ \BQ_a (\rho_\alpha)) (e_\ell, 0) =  \left\{ \begin{array}{ll}
 \alpha(v) & \mbox{ if } \alpha(1)=1,\\
 \alpha(v) - \partial^a \alpha (w) & \mbox{ if } \alpha(1)>1;
\end{array}\right. \]
\[  (\phi \circ \BQ_a (\rho_\alpha)) (0, e_\ell) = \boldsymbol{\iota}_a (\alpha)(w). \]
It follows that the $\C$-modules $\BR_a V$ and $\BQ_a^\dag V$ are isomorphic as claimed.

(2): This is immediate from the definition of $\BR_*$.
\end{proof}

\section{The Nakayama functor}

In this section we describe a torsion theory of $\C$-modules, and prove the second main result of this paper.

\subsection{Torsion theory}

Recall that for an $\C$-module $V$, an element $v \in V_n$ is \textbf{torsion} if there exists a morphism $\alpha: [n] \to [n+s]$ such that $\alpha \cdot v = 0$. The module $V$ is \textbf{torison} if for every $n \in \mathbb{N}$, every $v \in V_n$ is a torsion element. The module $V$ is \textbf{torsion free} if it contains no nonzero torsion elements. In particular, every projective $\C$-module is torsion free.

We shall point out that the definition of torsion modules in this paper is completely different from that defined in \cite[Subsction 9.1]{GS}, where an element $v \in V_n$ is torsion if and only if $\mi_b \cdot v = 0$.

\begin{lemma}
For $V \in \C \Module$,
\[
\bigoplus_{n \in \mathbb{N}} \{v \in V_n \mid v \text{ is torison} \}
\]
is a submodule of $V$.
\end{lemma}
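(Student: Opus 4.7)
The plan is to verify that the set $T(V) := \bigoplus_n T(V)_n$ of torsion elements (where $T(V)_n$ consists of the torsion elements in $V_n$) is closed under scalar multiplication in each $V_n$, under addition in each $V_n$, and under the action of every morphism $\beta:[n]\to [p]$ in $\C$. Scalar closure is immediate: if $\alpha \cdot v = 0$ for some $\alpha:[n]\to [n+s]$, then $\alpha \cdot (cv) = 0$ for every $c \in k$. The remaining two properties will both be reduced to a single combinatorial fact about $\OI$.

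The key step is a weak pushout property of $\C$: given any two morphisms $\alpha_1:[n] \to [m_1]$ and $\alpha_2 : [n]\to [m_2]$, there exist morphisms $\delta_i :[m_i]\to [q]$ with $\delta_1 \alpha_1 = \delta_2 \alpha_2$. I would prove this by an explicit construction. Writing $a^{(i)}_0, a^{(i)}_1, \ldots, a^{(i)}_n$ for the sizes of the gaps cut out of $[m_i]$ by the image $\alpha_i([n])$ (so $\sum_j a^{(i)}_j = m_i - n$), set $q := m_1 + m_2 - n$ and choose the $n$-element subset $C \subseteq [q]$ whose $j$-th gap has size $a^{(1)}_j + a^{(2)}_j$. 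Define $\delta_i$ to send $\alpha_i([n])$ onto $C$ in order, and to embed the $j$-th gap of $[m_i]$ into the first $a^{(i)}_j$ positions of the $j$-th gap of $[q]$ when $i = 1$, and into the last $a^{(i)}_j$ positions when $i = 2$. These maps are strictly order-preserving and satisfy $\delta_1 \alpha_1 = \delta_2 \alpha_2$ by construction.

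Given this lemma, the closure properties follow at once. For addition, if $\alpha_1 v = 0$ and $\alpha_2 w = 0$ for $v, w \in V_n$, the common composite $\mu := \delta_1 \alpha_1 = \delta_2 \alpha_2$ annihilates both $v$ and $w$, so $\mu (v+w) = 0$ and hence $v+w \in T(V)_n$. For closure under a morphism action, if $v \in T(V)_n$ with $\alpha v = 0$ for some $\alpha : [n]\to [n+s]$ and $\beta :[n]\to [p]$ is arbitrary, applying the lemma to the span $(\alpha, \beta)$ produces $\delta :[n+s]\to [q]$ and $\gamma :[p]\to [q]$ with $\gamma \beta = \delta \alpha$; then $\gamma (\beta v) = \delta (\alpha v) = 0$, so $\beta v \in T(V)_p$.

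The only non-routine ingredient is the weak pushout construction, which is a straightforward bookkeeping exercise on finite totally ordered sets; I expect no real obstacle beyond keeping the indexing of gaps consistent. Conceptually, the same argument shows that the coslice of $\C$ under each $[n]$ is a filtered category, which gives an alternative and more abstract description of $T(V)_n$ as the kernel of the canonical map from $V_n$ into the corresponding direct limit of the values of $V$.
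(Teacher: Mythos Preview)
Your proposal is correct and follows essentially the same approach as the paper: both reduce closure under addition and under morphism action to the weak pushout (amalgamation) property of $\C$, and both use that property in the same way. The only difference is that the paper records the amalgamation property as an ``easy observation'' without proof, whereas you spell out an explicit gap-based construction and add the filtered-coslice remark.
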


\begin{proof}
We need to check the following two facts:
\begin{enumerate}
\item if $v \in V_n$ is torsion and $\alpha: [n] \to [n+s]$ is a morphism, then $\alpha \cdot v$ is also a torsion element;
\item if $v \in V_n$ and $w \in V_n$ are torsion, so is $v + w$.
\end{enumerate}
Their proofs rely on the following easy observation: given two morphisms $f_1 \in \C([m], [n])$ and $f_2 \in \C([m], [r])$, there exist an object $[s]$ and morphisms $g_1 \in \C([n], [s])$ and $g_2 \in \C([r], [s])$ such that $g_1 f_1 = g_2 f_2$.

Since $v$ is torsion, one can find a certain $f: [n] \to [n+r]$ such that $f \cdot v = 0$. By the observation, we can take an object $[q]$ and morphisms $\beta_1$ and $\beta_2$ ending at $[q]$ with $\beta_1 f = \beta_2 \alpha$. Consequently, $\beta_2 (\alpha \cdot v) = \beta_1 (f \cdot v) = 0$, so $\alpha \cdot v$ is torsion. Similarly, if $v$ and $w$ are torsion, there exist $\alpha_1$ and $\alpha_2$ starting at $[n]$ such that $\alpha_1 \cdot v = \alpha_2 \cdot w$ = 0. Now choose $\beta_1$ and $\beta_2$ with $\beta_1 \alpha_1 = \beta_2 \alpha_2$. Then
\[
(\beta_1 \alpha_1) \cdot (v+w) = (\beta_1 \alpha_1) \cdot v + (\beta_2 \alpha_2)\cdot w = 0.
\]
This finishes the proof.
\end{proof}

The submodule constructed in the above lemma is the maximal torsion submodule of $V$, denoted by $V_T$. The quotient $V_F = V/V_T$ is called the torsion free part of $V$. Let $\C \Module^{\tor}$ and $\C \Module^{\rm{tf}}$ be the categories of torsion $\C$-modules and torsion free $\C$-modules respectively. It is easy to check that $\C \Module^{\tor}$ is an abelian category, and
\[
\Hom_{k\C} (T, F) = 0, \quad \forall T \in \C \Module^{\tor}, \, \forall F \in \C \Module^{\rm{tf}}.
\]
Furthermore, if $\Hom_{k\C} (T, F) = 0$ for all torsion modules $T$, then $F \in \C \Module^{\rm{tf}}$; dually, if $\Hom_{k\C} (T, F) = 0$ for all torsion free modules $F$, then $T \in \C \Module^{\tor}$.

\subsection{Serre quotient}

Recall that $\C \Module^{\tor}$ is abelian, we obtain a Serre quotient category $\C \Module / \C \Module^{\tor}$ and two functors
\[
\xymatrix{
\C \Module \ar@<.5ex>[rr]^-{\loc} & & \C \Module / \C \Module^{\tor} \ar@<.5ex>[ll]^-{\se},
}
\]
where $\loc$ and $\se$ are called the localization functor and the section functor respectively.

A $\C$-module $V$ is \textbf{saturated} if one of the following equivalent conditions holds.

\begin{lemma} \label{saturated modules}
Let $V$ be a $\C$-module. Then the following conditions are equivalent.
\begin{enumerate}
\item $(\se \circ \loc) V \cong V$;
\item $\Hom_{\C} (T, V) = 0 = \Ext_{k\C}^1 (T, V)$ for any torsion $\C$-module $T$;
\item there is a homomorphism $f: I^1 \to I^0$ between torsion free injective modules such that $V \cong \ker f$.
\end{enumerate}
\end{lemma}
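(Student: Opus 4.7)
The plan is to prove the circular chain $(1) \Rightarrow (3) \Rightarrow (2) \Rightarrow (1)$, using standard features of the Serre quotient formalism: $\loc$ is exact, $\se$ is fully faithful and left exact with $\loc \circ \se \cong \ID$, the unit $\eta_V : V \to \se\loc V$ has torsion kernel and cokernel, $\se$ preserves injective objects (as right adjoint of an exact functor), and the adjunction isomorphism
\[
\Hom_{k\C}(T, \se\loc V) \;\cong\; \Hom_{\C\Module/\C\Module^{\tor}}(\loc T, \loc V)
\]
holds; the latter vanishes whenever $T$ is torsion, since then $\loc T = 0$. I shall repeatedly invoke the already recorded fact that $\Hom_{k\C}(T,F) = 0$ for any torsion $T$ and torsion free $F$.

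For $(1) \Rightarrow (3)$, pick an injective resolution $0 \to \loc V \to J^0 \to J^1 \to \cdots$ in the Serre quotient and apply the left exact functor $\se$; the hypothesis $V \cong \se\loc V$ then yields $V \cong \kernel(\se J^0 \to \se J^1)$, with $I^i := \se J^i$ injective in $\C\Module$ and torsion free because $\Hom_{k\C}(T, \se J^i) \cong \Hom_{\C\Module/\C\Module^{\tor}}(\loc T, J^i) = 0$ for every torsion $T$. For $(3) \Rightarrow (2)$, the cokernel $I^1/V$ embeds into the torsion free $I^0$ and is therefore torsion free itself; applying $\Hom_{k\C}(T,-)$ to $0 \to V \to I^1 \to I^1/V \to 0$, the injectivity of $I^1$ together with the vanishing of $\Hom_{k\C}(T, I^1)$ and $\Hom_{k\C}(T, I^1/V)$ forces $\Hom_{k\C}(T, V) = 0 = \Ext^1_{k\C}(T, V)$ for every torsion $T$.

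The main obstacle is $(2) \Rightarrow (1)$. From $\Hom_{k\C}(T, V) = 0$ for all torsion $T$ it follows that $V$ is torsion free, so the unit $\eta_V$ is injective (its kernel, being a torsion submodule of the torsion free $V$, is zero). Write the short exact sequence $0 \to V \to \se\loc V \to C \to 0$ with $C$ torsion; the hypothesis $\Ext^1_{k\C}(C, V) = 0$ makes this sequence split, producing a direct summand embedding $C \hookrightarrow \se\loc V$. But adjunction gives
\[
\Hom_{k\C}(C, \se\loc V) \;\cong\; \Hom_{\C\Module/\C\Module^{\tor}}(\loc C, \loc V) \;=\; 0
\]
because $\loc C = 0$, so this embedding must be zero and therefore $C = 0$. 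The subtle point is that neither $\Hom_{k\C}(C, V) = 0$ nor $\Ext^1_{k\C}(C, V) = 0$ alone forces $C = 0$; it is the combination of the splitting with the Serre adjunction that closes the argument.
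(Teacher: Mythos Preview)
Your argument is correct and in substance matches what the paper does, only with the details filled in: the paper simply cites Gabriel for $(1)\Leftrightarrow(2)$ and dispatches $(2)\Leftrightarrow(3)$ with the one-line remark that torsion free injective $\C$-modules are ``torsion acyclic'', whereas you run the cycle $(1)\Rightarrow(3)\Rightarrow(2)\Rightarrow(1)$ explicitly via the adjunction $\Hom_{k\C}(T,\se X)\cong\Hom(\loc T,X)$ and the splitting trick. Two small cosmetic points: the statement labels the map as $f:I^1\to I^0$, so in your construction one should take $I^1=\se J^0$ and $I^0=\se J^1$ rather than $I^i=\se J^i$; and your step $(1)\Rightarrow(3)$ silently uses that the Serre quotient has enough injectives, which holds here because $\C\Module^{\tor}$ is a localizing subcategory of the Grothendieck category $\C\Module$.
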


\begin{proof}
The equivalence between (1) and (2) is a classical result of Gabriel; see \cite{Gab}. The equivalence between (2) and (3) is also straightforward by noting that torsion free injective $\C$-modules are torsion acyclic.
\end{proof}

It follows from \cite{Gab} that the functor $\se$ sends an object in $\C \Module /\C \Module^{\tor}$ to a saturated module. Therefore, if we let $\C \Module^{\rm{sa}}$ be the full subcategory consisting of all saturated modules, then $\sec$ and $\loc$ give an equivalence between $\C \Module^{\rm{sa}}$ and $\C \Module / \C \Module ^{\tor}$. However, the abelian structure of $\C \Module^{\rm{sa}}$ is not inherited from that of $\C \Module$. More explicitly, let $\alpha: V \to W$ be a morphism in $\C \Module^{\rm{sa}}$. Then its kernel in $\C \Module^{\rm{sa}}$ is the same as its kernel in $\C \Module$, but the cokernels of are different. Actually, the cokernel of $\alpha$ in $\C \Module^{\rm{sa}}$ is defined to be $(\rm{sec} \circ \rm{loc}) (W/\alpha(V))$.

\subsection{An equivalence}

From now on we let $k$ be a field. Denote the category of finitely generated $\C$-modules by $\C \module$ (which is abelian), and the category of finitely generated torsion $\C$-modules by $\C \module^{\tor}$. Restricted to this framework, we have another pair of functors
\[
\xymatrix{
\C \module \ar@<.5ex>[rr]^-{\loc} & & \C \module / \C \module^{\tor} \ar@<.5ex>[ll]^-{\se},
}
\]
and the conclusion of Lemma \ref{saturated modules} also holds for this new setup (note that $\C \module$ also has enough injectives, see \cite[Theorem 14.2]{GS}). We want to describe an equivalence between $\C \module / \C \module^{\tor}$ and $\C \fdmod$, the category of finite dimensional $\C$-modules, induced by the Nakayama functors. Since these functors have been studied in \cite{GLX} for representations of discrete categories satisfying certain combinatorial conditions, in this paper we only briefly mention their definitions. For more details, please refer to \cite[Section 2]{GLX}.

Recall that the Nakayama functor $\boldsymbol{\nu}: \C \module \to \C \fdmod$ is defined to be
\[
\boldsymbol{\nu} = \Hom_k(-, k) \circ \Hom_{k\C} (-, k\C) = \Hom_k(-, k) \circ \bigoplus_{n \in \N} \Hom_{k\C} (-, k\C e_n).
\]
Since for $V \in \C \module$, there are only finitely many $n \in \N$ such that $\Hom_{k\C} (V, k\C e_n)) \neq 0$, $\boldsymbol{\nu} V$ is finite dimensional. Thus $\boldsymbol{\nu}$ is well defined. Furthermore, it sends finitely generated projective $\C$-modules to finite dimensional injective $\C$-modules.

The inverse Nakayama functor $\boldsymbol{\nu}^{-1}: \C \fdmod \to \C \module$ is defined to be
\[
\boldsymbol{\nu}^{-1} = \Hom_{(k\C)^{\mathrm{op}}} (-, k\C) \circ \Hom_k(-, k) = \bigoplus_{n \in \N} \Hom_{(k\C)^{\mathrm{op}}} (-, e_n k\C) \circ \Hom_k(-, k).
\]
By \cite[Lemma 2.2]{GLX}, the functor $\boldsymbol{\nu}^{-1}$ is also well defined.

To prove the second theorem in the Introduction, we need some elementary facts about injective $\C$-modules and Nakayama functors.

\begin{proposition}\cite[Theorem 14.2]{GS} \label{injectives}
Let $V$ be a finitely generated torsion free $\C$-module. Then $V$ is injective if and only if it is projective.
\end{proposition}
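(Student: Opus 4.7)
The plan is to prove both implications separately. For ``projective implies injective'', since every finitely generated projective $\C$-module is a finite direct sum of representables $M(n)$, and finite direct sums of injectives are injective, it suffices to show each $M(n)$ is injective in $\C\module$. One reduces by taking projective resolutions of a test module $W$ to verifying the Ext vanishing $\Ext^1_{k\C}(M(m), M(n)) = 0$ for all $m, n \geqslant 0$. To split a given extension $0 \to M(n) \to E \to M(m) \to 0$, lift the canonical generator $e_m \in M(m)_m$ to some $\tilde{e} \in E_m$ and define a candidate section by $\beta \mapsto \beta \cdot \tilde{e}$ for each morphism $\beta: [m] \to [r]$. Well-definedness and $\C$-linearity of this assignment follow from the quadratic presentation of $k\C$ recorded in \cite[Section~2]{GS}, using that in $\OI$ every morphism is determined by its underlying subset of the codomain.

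For ``torsion free injective implies projective'', let $V$ be a finitely generated torsion free injective $\C$-module. The strategy is to peel off free summands one at a time. The structural input is that every finitely generated torsion free $\C$-module admits a finite filtration $0 = V^0 \subset V^1 \subset \cdots \subset V^k = V$ whose successive quotients $V^i / V^{i-1}$ are isomorphic to free modules $M(n_i)$; this is a standard-module filtration available in the $\OI$ setting. Given such a filtration, the bottom submodule $V^1 \cong M(n_1)$ is injective by the first direction, so the inclusion $V^1 \hookrightarrow V$ splits via a retraction $V \to V^1$ extending the identity. One obtains $V \cong M(n_1) \oplus V'$, and $V'$ is again finitely generated (as a quotient of $V$), torsion free and injective (as a direct summand of $V$), with strictly shorter filtration. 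Iterating finitely many times decomposes $V$ as a finite direct sum of free modules, proving projectivity.

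The chief obstacle is the existence of the standard-module filtration used in the second direction. This relies on the ordered combinatorial structure that distinguishes $\OI$ from $\FI$, and its verification is the principal content of \cite[Theorem~14.2]{GS}. The first direction, while computationally involved, reduces to a direct combinatorial check once the explicit quadratic relations in $k\C$ are in hand. Once both directions are secured, the equivalence between projectivity and injectivity inside the torsion free part of $\C\module$ follows.
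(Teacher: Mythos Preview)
There is a genuine gap in your first direction. You claim that to show each $M(n)$ is injective it suffices to verify $\Ext^1_{k\C}(M(m), M(n)) = 0$ for all $m$, and that this reduction is achieved by taking a projective resolution of an arbitrary test module $W$. This is incorrect on two counts. First, $\Ext^1_{k\C}(M(m), M(n)) = 0$ holds trivially for every $m$ and $n$, simply because $M(m)$ is projective; any extension $0 \to M(n) \to E \to M(m) \to 0$ splits by lifting $e_m$ along the surjection $E \to M(m)$, with no combinatorial input from the quadratic relations needed. Second, and more importantly, a projective resolution of $W$ computes $\Ext^1(W, M(n))$ as a cohomology of $\Hom(P_\bullet, M(n))$, and the vanishing of $\Ext^1(P_i, M(n))$ for the individual terms $P_i$ says nothing about this cohomology. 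What would reduce $\Ext^1(W, M(n)) = 0$ to the case $W = M(m)$ is a \emph{filtration} of $W$ with subquotients of the form $M(m)$, via the long exact sequence; but arbitrary finitely generated $\C$-modules (for instance torsion modules, or simple modules) admit no such filtration. So your argument does not establish injectivity of $M(n)$.

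Since your second direction uses the first (you peel off the bottom piece $V^1 \cong M(n_1)$ precisely because $M(n_1)$ is injective), the whole argument collapses. For comparison, the paper does not attempt an independent proof of either direction: it simply invokes \cite[Theorem~10.12]{GS} for ``projective $\Rightarrow$ injective'' and the classification of indecomposable injectives in \cite[Theorem~14.2]{GS} for the converse. The actual work proving that the $M(n)$ are injective is carried out in \cite{GS} by structural methods for the increasing monoid, not by the Ext computation you sketch.
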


\begin{proof}
By \cite[Proposition 3.7]{GS}, one can convert representation theory of $\OI$ to representation theory of the increasing monoid. If $V$ is projective, then it is also injective by \cite[Theorem 10.12]{GS}. Conversely, the classification of injective objects has been described in \cite[Theorem 14.2]{GS}, and in particular, torsion free injective objects are projective modules; more explicitly, every indecomposable torsion free injective $\C$-module is isomorphic to $M(n)$ for a certain $n \in \N$.
\end{proof}

\begin{proposition}
Notation as above. Then we have:
\begin{enumerate}
\item The pair $(\boldsymbol{\nu}, \, \boldsymbol{\nu}^{-1})$ is an adjoint pair.
\item The functor $\boldsymbol{\nu}$ is exact.
\item $\boldsymbol{\nu} \circ \boldsymbol{\nu}^{-1} \cong \ID_{\C \fdmod}$.
\end{enumerate}
\end{proposition}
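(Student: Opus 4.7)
The overall plan is to follow the strategy of \cite{GLX}: namely, establish the three claims by reducing to properties of the indecomposable projective and injective objects, where direct computation is feasible. The inputs specific to $\OI$ that I need are (a) Proposition \ref{injectives}, which identifies finitely generated torsion-free injectives with the finitely generated projectives $M(n)=k\C e_n$, and (b) the classification of indecomposable finite dimensional injectives in $\C\fdmod$ from \cite[Theorem 14.2]{GS}.

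For part (2), I would argue as follows. By Proposition \ref{injectives}, every finitely generated projective $\C$-module $M(n)=k\C e_n$ is injective inside $\C\module$. Consequently $\Hom_{k\C}(-,k\C e_n)$ is exact as a functor on $\C\module$. Since $\Hom_{k\C}(V,k\C)=\bigoplus_{n\in\N}\Hom_{k\C}(V,k\C e_n)$ and only finitely many summands are nonzero for $V$ finitely generated, the contravariant functor $\Hom_{k\C}(-,k\C)$ is exact on $\C\module$. As $k$ is a field, $D=\Hom_k(-,k)$ is exact, so $\boldsymbol{\nu}=D\circ\Hom_{k\C}(-,k\C)$ is exact.

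For part (1), I would prove the adjunction by the standard tensor-hom calculation. Writing $D$ for $\Hom_k(-,k)$, one has for any $V\in\C\module$ and $W\in\C\fdmod$ a chain of natural isomorphisms
\[
\Hom_{k\C}(\boldsymbol{\nu} V,W)=\Hom_{k\C}\bigl(D\Hom_{k\C}(V,k\C),\,W\bigr)\cong D\bigl(DW\otimes_{k\C}\Hom_{k\C}(V,k\C)\bigr),
\]
using that $W$ and $\Hom_{k\C}(V,k\C)$ are finite dimensional so $D$ is an involution and $\Hom_{k\C}(X,Y)\cong D(DY\otimes_{k\C} X)$ for finite dimensional $Y$. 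Dually,
\[
\Hom_{k\C}(V,\boldsymbol{\nu}^{-1}W)=\Hom_{k\C}\bigl(V,\Hom_{(k\C)^{\mathrm{op}}}(DW,k\C)\bigr)\cong\Hom_{(k\C)^{\mathrm{op}}}\bigl(DW,\Hom_{k\C}(V,k\C)\bigr),
\]
by the usual left-right Hom swap for the $k\C$-bimodule $k\C$; and this last group is also naturally $D(DW\otimes_{k\C}\Hom_{k\C}(V,k\C))$. The only care needed is that $k\C$ is non-unital, but since the sums $\bigoplus_n\Hom_{k\C}(-,k\C e_n)$ and $\bigoplus_n\Hom_{(k\C)^{\mathrm{op}}}(-,e_nk\C)$ are finite on finitely generated/finite dimensional inputs, the usual adjunction arguments apply as in \cite[Definition 4.1 and Lemma 4.2]{GL1}.

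For part (3), I would first verify $\boldsymbol{\nu}\boldsymbol{\nu}^{-1}\cong\ID$ on finite dimensional injective modules and then propagate to $\C\fdmod$ using exactness. Concretely, for a finite dimensional injective $I$, its $k$-dual $DI$ is a finitely generated projective right $k\C$-module (a direct summand of a finite sum $\bigoplus e_n k\C$, via the classification in \cite[Theorem 14.2]{GS}); on such summands the identity $\boldsymbol{\nu}\boldsymbol{\nu}^{-1}I\cong I$ is a direct Hom-calculation since $\Hom_{(k\C)^{\mathrm{op}}}(e_nk\C,k\C)\cong k\C e_n$ and $\Hom_{k\C}(k\C e_n,k\C)\cong e_nk\C$. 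Then, for an arbitrary $W\in\C\fdmod$, choose a finite injective copresentation $0\to W\to I^0\to I^1$ in $\C\fdmod$. Applying $\boldsymbol{\nu}^{-1}$ (which is left exact being a right adjoint) followed by the exact functor $\boldsymbol{\nu}$, and comparing with the original copresentation via the natural counit $\boldsymbol{\nu}\boldsymbol{\nu}^{-1}\to\ID_{\C\fdmod}$, a five-lemma argument gives $\boldsymbol{\nu}\boldsymbol{\nu}^{-1}W\cong W$.

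The main obstacle I anticipate is the copresentation step in part (3): the general framework of \cite{GLX} requires that $\C\fdmod$ has enough injectives, whose existence for $\OI$ must be extracted from \cite[Theorem 14.2]{GS} and the torsion/saturation theory already developed in this section. Once that is in place, along with the identification of indecomposable finite dimensional injectives, both the base case and the five-lemma step are routine.
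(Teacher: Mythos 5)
Your proposal is sound and follows essentially the same route as the paper. The paper's own proof consists entirely of citations to \cite[Lemma 2.4, Lemma 2.11, Proposition 3.3]{GLX}, with the only $\OI$-specific input being Proposition~\ref{injectives} (finitely generated projective $\C$-modules are injective), which is exactly what feeds into your argument for part~(2). Your argument for part~(3) correctly identifies the two additional facts needed: that indecomposable finite dimensional injectives are the $\boldsymbol{\nu} M(n) \cong D(e_n k\C)$ (with $D=\Hom_k(-,k)$), and that $\C\fdmod$ has enough injectives; for the latter, a one-line argument suffices, namely dualize a projective presentation of the right module $DW$ to get an embedding of $W$ into a finite direct sum of $D(e_n k\C)$'s. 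The only slip is in the Hom--tensor calculation for part~(1): you write $D\bigl(DW\otimes_{k\C}\Hom_{k\C}(V,k\C)\bigr)$, but $DW$ and $\Hom_{k\C}(V,k\C)$ are both right $k\C$-modules, so this tensor product does not typecheck. The common value of the two adjunction Hom-sets should be $D\bigl(DW \otimes_{k\C} D\Hom_{k\C}(V,k\C)\bigr) = D(DW\otimes_{k\C}\boldsymbol{\nu}V)$, using that $\Hom_{k\C}(V,k\C)$ is finite dimensional when $V$ is finitely generated so that $D$ is an involution on it. Once that is corrected, your chain of isomorphisms establishes the adjunction as claimed.
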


\begin{proof}
Statements (1) follows from \cite[Lemma 2.4]{GLX}. Statement (2) follows from \cite[Lemma 2.11]{GLX} since finitely generated projective $\C$-modules are also injective. Statement (3) is \cite[Proposition 3.3]{GLX}.
\end{proof}

Now we are ready to prove the following result.

\begin{theorem}
We have the following equivalence of abelian categories
\[
\xymatrix{
\C \module / \C \module^{\tor} \ar@<.5ex>[rr]^-{\boldsymbol{\nu}} & & \C \fdmod \ar@<.5ex>[ll]^-{\boldsymbol{\nu}^{-1}}.
}
\]
\end{theorem}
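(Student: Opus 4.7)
The plan is to show that $\boldsymbol{\nu}$ descends to an exact functor $\bar{\boldsymbol{\nu}}$ on the Serre quotient, that $\boldsymbol{\nu}^{-1}$ lands in the saturated subcategory so the adjunction also descends, and then to verify that the unit of the descended adjunction is an isomorphism, which is where bootstrapping from the projective case $M(n)$ enters. First I would observe that $\boldsymbol{\nu}$ vanishes on $\C \module^{\tor}$: for torsion $T$ and every $n$, $\Hom_{k\C}(T, k\C e_n) = 0$, since $k\C e_n = M(n)$ is torsion free (the morphisms of $\C$ form a $k$-basis of $k\C$ whose composites are never zero). By the universal property of the Serre quotient the exact functor $\boldsymbol{\nu}$ factors uniquely as $\boldsymbol{\nu} = \bar{\boldsymbol{\nu}} \circ \loc$ for an exact functor $\bar{\boldsymbol{\nu}}: \C \module / \C \module^{\tor} \to \C \fdmod$.

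Next I would check that $\boldsymbol{\nu}^{-1} X$ is saturated for every $X \in \C\fdmod$. Choose an injective copresentation $0 \to X \to J^0 \to J^1$ in $\C \fdmod$ and apply left exact $\boldsymbol{\nu}^{-1}$ to get $0 \to \boldsymbol{\nu}^{-1} X \to \boldsymbol{\nu}^{-1} J^0 \to \boldsymbol{\nu}^{-1} J^1$. Each $\boldsymbol{\nu}^{-1} J^i$ is torsion free by the adjunction ($\Hom(T, \boldsymbol{\nu}^{-1} J^i) \cong \Hom(\boldsymbol{\nu} T, J^i) = 0$), and injective in $\C \module$ because exactness of $\boldsymbol{\nu}$ together with injectivity of $J^i$ in $\C \fdmod$ makes $\Hom_{k\C}(-, \boldsymbol{\nu}^{-1} J^i) \cong \Hom_{\C \fdmod}(\boldsymbol{\nu}(-), J^i)$ exact on $\C \module$. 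Lemma \ref{saturated modules}(3) then certifies $\boldsymbol{\nu}^{-1} X$ as saturated; combined with the identity $\Hom_{\C \module}(V, U) \cong \Hom_{\C \module / \C \module^{\tor}}(\loc V, \loc U)$ for saturated $U$, the given adjunction descends to an adjoint pair $(\bar{\boldsymbol{\nu}}, \loc \circ \boldsymbol{\nu}^{-1})$. Its counit unwinds to $\boldsymbol{\nu} \boldsymbol{\nu}^{-1} X \xrightarrow{\sim} X$, an isomorphism by the hypothesis $\boldsymbol{\nu} \boldsymbol{\nu}^{-1} \cong \ID$; hence $\loc \circ \boldsymbol{\nu}^{-1}$ is fully faithful and it remains to prove essential surjectivity.

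For this, I verify that the unit $\eta_W: W \to \boldsymbol{\nu}^{-1} \boldsymbol{\nu} W$ is an isomorphism on every saturated $W$. For $W = M(n)$, the identifications $\Hom_{k\C}(M(n), k\C) \cong e_n k\C$, the double-dual $\Hom_k(\Hom_k(e_n k\C, k), k) \cong e_n k\C$, and $\Hom_{(k\C)^{\mathrm{op}}}(e_n k\C, k\C) \cong k\C e_n$ combine to give $\boldsymbol{\nu}^{-1} \boldsymbol{\nu} M(n) \cong M(n)$, so $\eta_{M(n)}$ lies in $\Hom_{k\C}(M(n), M(n)) \cong k$. The triangle identity forces $\boldsymbol{\nu}(\eta_{M(n)})$ composed with the counit at $\boldsymbol{\nu} M(n)$ to equal $\mathrm{id}_{\boldsymbol{\nu} M(n)}$; invertibility of the counit and $\boldsymbol{\nu} M(n) \neq 0$ then force this scalar to be nonzero, so $\eta_{M(n)}$ is an isomorphism. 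For a general saturated $W$, Lemma \ref{saturated modules}(3) together with Proposition \ref{injectives} writes $W = \ker(f: I^1 \to I^0)$ with $I^0, I^1$ finitely generated projective, and comparison of the left-exact sequences $0 \to W \to I^1 \to I^0$ and $0 \to \boldsymbol{\nu}^{-1} \boldsymbol{\nu} W \to \boldsymbol{\nu}^{-1} \boldsymbol{\nu} I^1 \to \boldsymbol{\nu}^{-1} \boldsymbol{\nu} I^0$ via $\eta$ (with $\eta_{I^0}, \eta_{I^1}$ now isomorphisms) canonically identifies the two kernels, so $\eta_W$ is an isomorphism.

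The main obstacle is the packet of identifications on $M(n)$ and the subsequent kernel-comparison argument, made slightly delicate by the fact that $\boldsymbol{\nu}^{-1}$ is only left exact so one must work with left-exact (rather than short-exact) rows. Once $\eta_W$ has been established on saturated modules, any $\loc V$ in $\C \module / \C \module^{\tor}$ satisfies $\loc V \cong \loc (\se \loc V) \cong \loc \boldsymbol{\nu}^{-1} \boldsymbol{\nu} (\se \loc V) = (\loc \circ \boldsymbol{\nu}^{-1})(\boldsymbol{\nu} (\se \loc V))$, so $\loc \circ \boldsymbol{\nu}^{-1}$ is essentially surjective; combined with the full faithfulness established in the second paragraph, this gives the desired equivalence, with $\bar{\boldsymbol{\nu}}$ as quasi-inverse.
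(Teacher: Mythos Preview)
Your argument is correct but proceeds along a different axis from the paper. The paper invokes Gabriel's localization theorem (\cite[Proposition III.2.5]{Gab}): since $\boldsymbol{\nu}$ is exact with right adjoint $\boldsymbol{\nu}^{-1}$ satisfying $\boldsymbol{\nu}\boldsymbol{\nu}^{-1}\cong\ID$, that result directly gives an equivalence $\C\module/\ker\boldsymbol{\nu}\simeq\C\fdmod$, and the only remaining task is to identify $\ker\boldsymbol{\nu}$ with $\C\module^{\tor}$. The nontrivial inclusion---that $\boldsymbol{\nu} V\neq 0$ whenever $V$ is not torsion---is obtained by embedding $V_F$ into a finitely generated torsion free injective, which by Proposition~\ref{injectives} is projective (a finite sum of $M(n)$'s), producing a nonzero map $V\to M(n)$ for some $n$. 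You instead rebuild the equivalence by hand: you descend the adjunction to the quotient, show $\boldsymbol{\nu}^{-1}$ lands in saturated modules, and then verify the unit on saturated objects via the explicit identification $\boldsymbol{\nu}^{-1}\boldsymbol{\nu} M(n)\cong M(n)$ combined with left-exactness of $\boldsymbol{\nu}^{-1}\boldsymbol{\nu}$ applied to a presentation of a general saturated $W$ as a kernel between projectives. Both routes ultimately hinge on the same structural input (Proposition~\ref{injectives}), but the paper's is considerably terser by outsourcing the unit/counit bookkeeping to Gabriel, whereas yours is self-contained and makes the mechanics of the equivalence explicit.
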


\begin{proof}
By \cite[Proposition III.2.5]{Gab}, the Nakayama functor $\boldsymbol{\nu}$ induces an equivalence $\bar{\boldsymbol{\nu}}: \C \module / \ker \boldsymbol{\nu} \to \C \fdmod$, so we only need to show that $\ker \boldsymbol{\nu} = \C \module^{\tor}$. Clearly, $\C \module^{\tor}$ is contained in the kernel, so it suffices to show the inclusion of the other direction, or equivalently, any finitely generated $\C$-module which is not torsion is not sent to 0 by $\boldsymbol{\nu}$.

Suppose that $V$ is not torsion. Applying $\boldsymbol{\nu}$ to the short exact sequence $0 \to V_T \to V \to V_F \to 0$ one gets $\boldsymbol{\nu} V \cong \boldsymbol{\nu} V_F$ since $\boldsymbol{\nu}$ is exact and $\boldsymbol{\nu} V_T = 0$. Therefore, we may assume that $V = V_F$. Consider an injection $0 \to V \to I$ with $I$ injective. We can extend it to a commutative diagram of exact sequences
\[
\xymatrix{
0 \ar[r] & V \cap I_T \ar[r] \ar[d] & I_T \ar[r] \ar[d] & I_T/(V \cap I_T) \ar[r] \ar[d] & 0\\
0 \ar[r] & V \ar[r] \ar[d] & I \ar[r] \ar[d] & I/V \ar[r] \ar[d] & 0\\
0 \ar[r] & V/(V\cap I_T) \ar[r] & I_F \ar[r] & I_F/(V /(V \cap I_T)) \ar[r] & 0.
}
\]
Since $V$ is torsion free and $I_T$ is torsion, $V \cap I_T = 0$. Therefore, $V$ is actually isomorphic to a submodule of $I_F$. Note that the middle column splits: since the functor $V \mapsto V_T$ is the right adjoint of the exact inclusion $\C \module^{\tor} \to \C \module$, it preserves injectives, and hence $I_T$ is injective as well. Therefore, $I_F$ is a finitely generated torsion free injective $\C$-module, which must be projective by Proposition \ref{injectives}. Consequently, the existence of the injective map $V \to I_F$ tells us that $\boldsymbol{\nu} V \neq 0$. This finishes the proof.
\end{proof}

\begin{remark} \normalfont
We remind the reader that this result is different from Statement (d) of \cite[Proposition 9.1]{GS}, since our definition of torsion modules is completely different. Therefore, neither of our result nor their result imply the other one.
\end{remark}

An immediate corollary of this theorem is:

\begin{corollary}
Every object in $\C \module^{\rm{sa}}$ is of finite length and of finite injective dimension.
\end{corollary}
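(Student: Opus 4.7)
The plan is to push both properties through the equivalence of abelian categories
\[
\C\module^{\rm{sa}} \xrightarrow{\,\loc\,} \C\module/\C\module^{\tor} \xrightarrow{\,\boldsymbol{\nu}\,} \C\fdmod
\]
coming from Subsection 4.2 and the preceding theorem, since equivalences of abelian categories preserve both composition length and injective dimension. Finite length in $\C\fdmod$ is then immediate: any chain of submodules of $W$ has length at most $\dim_k W$.

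For finite injective dimension, I would induct on the top degree $N := \max\{n : W_n \neq 0\}$ of $W \in \C\fdmod$. Two ingredients are key. First, $J_n := \boldsymbol{\nu}(M(n))$ is injective in $\C\Module$ (hence in $\C\fdmod$) by the previous proposition, since $M(n)$ is projective and $\boldsymbol{\nu}$ sends projectives to injectives; and by the identification $\Hom_{k\C}(M(n), k\C e_m) \cong k\C([m],[n])$ it is supported precisely in degrees $\{0,\ldots,n\}$ with $(J_n)_n \cong k$. Second, $\Hom_{\C\fdmod}(W, J_n) = 0$ for $n > N$: the compatibility of a morphism $\phi:W\to J_n$ with the $\C$-action at any $\alpha:[N]\to[N+1]$, combined with $W_{N+1}=0$ and the fact that every morphism $[N]\to[n]$ with $n>N$ factors through $[N+1]$, forces $\phi_N = 0$; an iteration downward in degree yields $\phi=0$.

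Applying Lemma \ref{saturated modules}(3) and Proposition \ref{injectives} to $V := \boldsymbol{\nu}^{-1}(W)$ produces an injection $V \hookrightarrow \bigoplus_i M(n_i)$ in $\C\module$ with the right-hand side torsion-free injective; the exact functor $\boldsymbol{\nu}$ yields $W \hookrightarrow \bigoplus_i J_{n_i}$ in $\C\fdmod$, and by the vanishing result above we may discard summands with $n_i > N$ and assume $n_i \le N$ for all $i$. A minimization of this embedding—constructed, for instance, by dualizing a minimal projective cover of $\Hom_k(W,k)$ in the category of finite-dimensional right $k\C$-modules—produces an injective envelope $J$ of $W$ in $\C\fdmod$ with $J_N = W_N$. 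The cokernel $J/W$ then has top degree strictly less than $N$; by the induction hypothesis it has finite injective dimension, and hence so does $W$.

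The main obstacle is the existence of the injective envelope $J$ in $\C\fdmod$ with the correct top-degree property. This is not automatic because $\C\fdmod$ is not a Grothendieck category, but it can be justified by the duality sketched above together with the standard existence of minimal projective covers for finite-dimensional right $k\C$-modules. An alternative route is a direct pruning argument, using that $\mathrm{Aut}(J_N^{a}) \supseteq \mathrm{GL}_{a}(k)$ acts on $(J_N^a)_N \cong k^a$ with transitive action on complements of the image of $W_N$, allowing redundant copies of $J_N$ to be excised without destroying injectivity.
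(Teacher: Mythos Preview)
Your proposal is correct and uses the same reduction as the paper: transport both properties through the equivalence $\C\module^{\rm sa}\simeq\C\module/\C\module^{\tor}\simeq\C\fdmod$. The paper's entire proof is the single sentence ``This is clear since $\C\module^{\rm sa}$ is equivalent to $\C\fdmod$,'' taking the finite-length and finite-injective-dimension properties of $\C\fdmod$ as evident; your inductive argument on the top degree is a sound way to supply the detail the paper omits, though it can be streamlined by noting that the injective hull of $W$ already exists in $\C\Module$, is finite-dimensional (the hull of each simple at $[n]$ is the finite-dimensional $J_n$), and automatically agrees with $W$ in degree $N$ because $W_N\subseteq\mathrm{soc}(W)$---so no separate pruning or duality construction is needed.
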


\begin{proof}
This is clear since $\C \module^{\rm{sa}}$ is equivalent to $\C \fdmod$.
\end{proof}

As a conclusion, we obtain the following commutative diagram of functors:
\[
\xymatrix{
\C \module^{\rm{sa}} \ar[r]^-{\mathrm{inc}} & \C \module \ar@<.5ex>[r]^-{\boldsymbol{\nu}} \ar[d]^-{\mathrm{loc}} & \C \fdmod \ar@<.5ex>[l]^-{\boldsymbol{\nu}^{-1}} \\
 & \C \module / \C \module^{\tor} \ar[ul]^-{\mathrm{sec}} \ar[ur]_-{\sim}
}
\]
and the following equivalences of abelian categories:
\[
\xymatrix{
\C \module^{\rm{sa}} \ar[r]^-{\sim} & \C \module / \C \module^{\tor} \ar[r]^-{\sim} & \C \fdmod.
}
\]

\subsection{Simple saturated modules}

One important application of the above equivalences is the classification of simple objects in $\C \Module^{\rm{sa}}$. Clearly, they are contained in $\C \module^{\rm{sa}}$ and correspond to simple objects in $\C \fdmod$. But simple objects in $\C \fdmod$ are parameterized by $\N$; that is, for every $n \in \N$, there is a unique simple $\C$-module whose value on $[n]$ is $k$, and whose values on other objects are 0. Therefore, isomorphism classes of simple saturated $\C$-modules are parameterized by $\N$ as well. Furthermore, from the above equivalence we also know that every simple saturated module is the socle of an indecomposable injective torsion free module, which is isomorphic to $M(n)$ for a certain $n \in \N$.

It is easy to check that $M(0)$ is a simple saturated module. Now suppose that $n \geqslant 1$. Note that there is a surjective homomorphism
\[
k\C e_n \to k\C \alpha_{n-1,i}, \quad e_n \mapsto \alpha_{n-1, i}
\]
denoted by $f_i$. Let $K_i$ be the kernel of $f_i$.

\begin{lemma}
Notation as above. Then $K_i$ as a two-sided ideal of $k\C$ is generated by $\langle \alpha_{n, i} - \alpha_{n, i+1} \rangle$.
\end{lemma}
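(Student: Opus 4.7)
The plan is to establish both inclusions. For the easy direction, I would verify that $\alpha_{n,i}\alpha_{n-1,i} = \alpha_{n,i+1}\alpha_{n-1,i}$ by direct computation: both composites coincide with the unique strictly increasing map $[n-1] \to [n+1]$ whose image is $[n+1] \setminus \{i, i+1\}$ (this is one of the quadratic relations in $\OI$ recalled in Section 2). Hence $f_i(\alpha_{n,i} - \alpha_{n,i+1}) = 0$, and since $f_i$ is a left $k\C$-module homomorphism, the entire left ideal $k\C\,(\alpha_{n,i} - \alpha_{n,i+1})$ is contained in $K_i$.

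For the reverse inclusion, I first describe $K_i$ concretely. Because $\alpha_{n-1,i}$ is a bijection of $[n-1]$ onto $[n] \setminus \{i\}$, the map $f_i$ sends a basis morphism $\beta : [n] \to [m]$ to the composition $\beta \circ \alpha_{n-1,i}$, which depends only on the restriction $\beta|_{[n]\setminus\{i\}}$. It follows that $K_i$ has a $k$-basis consisting of differences $\beta - \gamma$ where $\beta,\gamma:[n]\to[m]$ agree on $[n]\setminus\{i\}$; by swapping roles we may assume $\beta(i) < \gamma(i)$. Writing $a = \beta(i)$ and $b = \gamma(i)$, the strict monotonicity of $\beta$ and $\gamma$ together with their agreement off $i$ forces $\beta(i-1) < a$ and $b < \beta(i+1)$ (with the convention $\beta(0) = 0$, $\beta(n+1) = m+1$). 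Hence for each $0 \leqslant j \leqslant b-a$ the function $\beta_j$ agreeing with $\beta$ off $i$ and taking value $a+j$ at $i$ is a legitimate morphism in $\C$, and a telescoping gives $\beta - \gamma = \sum_{j=0}^{b-a-1}(\beta_j - \beta_{j+1})$. This reduces the claim to the case $b = a+1$.

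In the base case $b = a+1$, I would build $\delta : [n+1] \to [m]$ explicitly by $\delta(k) = \beta(k)$ for $k \leqslant i$, $\delta(i+1) = a+1$, and $\delta(k) = \beta(k-1)$ for $k \geqslant i+2$; the inequalities above ensure $\delta$ is strictly increasing. An index-by-index check using the formulas for $\alpha_{n,i}$ and $\alpha_{n,i+1}$ yields $\delta \circ \alpha_{n,i} = \gamma$ and $\delta \circ \alpha_{n,i+1} = \beta$, whence $\beta - \gamma = \delta \cdot (\alpha_{n,i+1} - \alpha_{n,i}) \in k\C\,(\alpha_{n,i} - \alpha_{n,i+1})$. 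Combining these steps, $K_i$ is generated by $\alpha_{n,i} - \alpha_{n,i+1}$ as a left $k\C$-ideal, which is the content of the statement (the left ideal generated by this element is automatically absorbed by the two-sided ideal generated by it inside $k\C e_n$).

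The only real obstacle is bookkeeping: checking that each $\beta_j$ remains strictly increasing at every stage, and that the auxiliary morphism $\delta$ composes correctly with both $\alpha_{n,i}$ and $\alpha_{n,i+1}$. Both reduce to elementary index chases, and there is no conceptual barrier beyond carefully tracking the two different off-by-one shifts inherent in the definitions of $\alpha_{n,i}$ and $\alpha_{n,i+1}$.
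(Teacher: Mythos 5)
Your proof is correct and follows the same route as the paper: show $f_i(\alpha_{n,i}-\alpha_{n,i+1})=0$ for one inclusion, then identify $K_i$ as the span of differences $\beta-\gamma$ of morphisms that agree off position $i$ (the paper cites \cite[Lemma~6.1]{Li} for this; you prove it directly from the definition of $f_i$, which is arguably cleaner), and finally factor each such difference through a $\delta:[n+1]\to[m]$ composed with $\alpha_{n,i}$ and $\alpha_{n,i+1}$. One small remark: the telescoping reduction to the case $b=a+1$ is unnecessary. If you set $\delta(i+1)=b=\gamma(i)$ rather than $a+1$, the same formula
\[
\delta(k)=\beta(k) \text{ for } k\leqslant i,\qquad \delta(i+1)=b,\qquad \delta(k)=\beta(k-1) \text{ for } k\geqslant i+2
\]
is strictly increasing for any $a<b$ (since $\delta(i)=a<b=\delta(i+1)<\gamma(i+1)=\beta(i+1)=\delta(i+2)$), and one checks exactly as you did that $\delta\alpha_{n,i}=\gamma$ and $\delta\alpha_{n,i+1}=\beta$. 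This is what the paper does in one step, so the telescoping, while harmless, can be dropped. (Incidentally, the paper's printed assignment $\gamma=\delta\alpha_{n,i+1}$, $\beta=\delta\alpha_{n,i}$ appears to have the two maps transposed given their convention $\gamma(i)\geqslant\beta(i)$; your version is the consistent one.)
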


\begin{proof}
By the definition of $f_i$, one has
\[
f_i(\alpha_{n, i} - \alpha_{n, i+1}) = \alpha_{n, i} \alpha_{n-1, i} - \alpha_{n, i+1} \alpha_{n-1, i} = 0
\]
Therefore, the two-sided ideal generated by $\langle \alpha_{n, i} - \alpha_{n, i+1} \rangle$ is contained in $K_i$. Conversely, by the definition of $f_i$, and using the same technology of \cite[Lemma 6.1]{Li}, one can show that $K_i$ is spanned (as a $k$-module) by elements $\beta - \gamma$ such that $\beta$ and $\gamma$ share the same source and target, and $\beta \alpha_{n-1, i} = \gamma \alpha_{n-1, i}$. Suppose that $\gamma > \beta$; that is, $\gamma \neq \beta$ and $\gamma(i) \geqslant \beta(i)$ for $i \in [n]$. Then one can find a morphism $\delta$ such that $\gamma = \delta \alpha_{n, i+1}$ and $\beta = \delta \alpha_{n, i}$. Consequently, $\beta - \gamma$ is contained in the two-sided ideal generated by $\alpha_{n, i} - \alpha_{n, i+1}$.
\end{proof}

The following result classifies all simple saturated $\C$-modules.

\begin{theorem} \label{simple saturated}
The submodule $\bigcap_{i \in [n]} K_i$ of $M(n)$ is a simple saturated $\C$-module. Furthermore, every simple saturated module is of this form up to isomorphism.
\end{theorem}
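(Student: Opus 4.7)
The plan is to identify $W := \bigcap_{i \in [n]} K_i$ with $\boldsymbol{\nu}^{-1}(L_n)$, where $L_n \in \C \fdmod$ denotes the simple $\C$-module with $(L_n)_n = k$ and $(L_n)_r = 0$ for $r \neq n$, and then invoke the equivalence $\C \module^{\rm{sa}} \simeq \C \fdmod$ just proved. For $n = 0$ the statement reduces to $W = M(0)$, which is already mentioned in the text, so I would focus on $n \geqslant 1$.

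First I would verify that $W$ is saturated. The homomorphisms $f_i$ assemble into a single morphism $M(n) \to \bigoplus_{i \in [n]} k\C \alpha_{n-1, i}$ whose kernel is $W$, so $M(n)/W$ embeds into $\bigoplus_i k\C \alpha_{n-1,i} \subseteq M(n-1)^{n}$, which is torsion-free. Since $M(n)$ is torsion-free and injective by Proposition \ref{injectives}, applying $\Hom(T, -)$ to the short exact sequence $0 \to W \to M(n) \to M(n)/W \to 0$ for any torsion $\C$-module $T$ gives $\Hom(T, W) = 0$ and $\Ext^1(T, W) \cong \Hom(T, M(n)/W) = 0$, so $W$ is saturated by condition (2) of Lemma \ref{saturated modules}.

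Next I would establish the identification $W \cong \boldsymbol{\nu}^{-1}(L_n)$ by direct computation. Unraveling the definition,
\[
(\boldsymbol{\nu}^{-1}(L_n))_m = \Hom_{(k\C)^{\rm op}} \bigl( \Hom_k(L_n, k), \, e_m k\C \bigr),
\]
and any right $k\C$-linear $\phi$ from the one-dimensional dual $\Hom_k(L_n, k)$ (concentrated in degree $n$) to $e_m k\C$ is determined by $\beta := \phi(v^*) \in e_m k\C e_n = M(n)_m$, where $v^*$ is a generator. Right-linearity forces $\beta \cdot \gamma = 0$ in $k\C$ for every morphism $\gamma:[r] \to [n]$ with $r < n$; since each such $\gamma$ factors through some $\alpha_{n-1,i}$, this collapses to the finitely many conditions $\beta \cdot \alpha_{n-1,i} = 0$ for $i \in [n]$, i.e.\ $\beta \in W_m$. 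A quick verification shows the bijection $\phi \leftrightarrow \beta$ intertwines the left $\C$-action (given on both sides by left multiplication in $k\C$), yielding the claimed isomorphism of $\C$-modules.

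Applying $\boldsymbol{\nu}$ to $W \cong \boldsymbol{\nu}^{-1}(L_n)$ and using $\boldsymbol{\nu} \circ \boldsymbol{\nu}^{-1} \cong \ID_{\C \fdmod}$ yields $\boldsymbol{\nu} W \cong L_n$. Given any short exact sequence $0 \to A \to W \to B \to 0$ in $\C \module^{\rm{sa}}$, exactness of $\boldsymbol{\nu}$ produces $0 \to \boldsymbol{\nu} A \to L_n \to \boldsymbol{\nu} B \to 0$; simplicity of $L_n$ forces $\boldsymbol{\nu} A = 0$ or $\boldsymbol{\nu} B = 0$, and by the previous theorem the corresponding term is torsion, hence zero because saturated modules are torsion-free. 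This gives simplicity of $W$. Uniqueness is then automatic: every simple object in $\C \fdmod$ is isomorphic to some $L_n$, and the equivalence sends it back to $\bigcap_{i \in [n]} K_i$. I expect the main technical obstacle to lie in the identification step — verifying cleanly that the a priori infinite family of right-linearity constraints really reduces to the $n$ equations $\beta \cdot \alpha_{n-1,i} = 0$, and that the resulting degreewise identification with $W$ is compatible with the ambient left $\C$-action rather than merely $k$-linear.
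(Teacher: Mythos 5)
Your proposal is correct, but it takes a genuinely different route from the paper's. Having established saturation (in essentially the same way), the paper computes $\boldsymbol{\nu}(\bigcap_i K_i)$ indirectly: it notes that $\boldsymbol{\nu} M(n) = D(e_n k\C)$ is an indecomposable finite-dimensional injective with simple socle at $[n]$, and then proves a separate lemma bounding $\dim \Hom_{k\C}(\bigcap_i K_i, M(n)) = 1$ and $\dim \Hom_{k\C}(\bigcap_i K_i, M(n-1)) = 0$ by manipulating the short exact sequence $0 \to \bigcap_i K_i \to M(n) \to C \to 0$; these dimension facts, together with the structure of $\boldsymbol{\nu} M(n)$, force $\boldsymbol{\nu}(\bigcap_i K_i)$ to be the socle. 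You instead compute $\boldsymbol{\nu}^{-1}(L_n)$ directly from its definition, observing that right-linearity of a map $DL_n \to e_m k\C$ amounts to $\beta\gamma = 0$ for all $\gamma:[r]\to[n]$ with $r<n$, and using the factorization of every such $\gamma$ through some $\alpha_{n-1,i}$ to collapse this to $\beta \in \bigcap_i (K_i)_m$; then you invoke $\boldsymbol{\nu}\boldsymbol{\nu}^{-1}\cong\ID$. Your route is more explicit and self-contained — it avoids the auxiliary dimension lemma and the structural facts about $\boldsymbol{\nu} M(n)$ — at the cost of unwinding the definition of $\boldsymbol{\nu}^{-1}$. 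One small wobble in your simplicity argument: a short exact sequence in $\C\module^{\rm{sa}}$ is not a short exact sequence in $\C\module$ (the cokernel in the saturated category is $\sec\loc$ of the naive cokernel), so you cannot quite apply "exactness of $\boldsymbol{\nu}$" literally; the clean statement is that the equivalence $\C\module^{\rm{sa}} \simeq \C\fdmod$ sends $\bigcap_i K_i$ to $L_n$, and equivalences preserve simplicity. The conclusion is the same, but it is worth phrasing carefully.
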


\begin{proof}
By the equivalence of $\C \module^{\rm{sa}}$ and $\C \fdmod$, it suffices to show that this intersection is saturated, and $\boldsymbol{\nu}$ sends it to the simple $\C$-module supported on $[n]$. For the first requirement, we note that there is an exact sequence
\[
0 \to \bigcap_{i \in [n]} K_i \to M(n) \to \bigoplus_{i \in [n]} k\C \alpha_{n-1, i}
\]
where the last map is the direct sum of those $f_i$. As each $k\C \alpha_{n-1, i}$ is torsion free, so is the image of $\bigoplus_{i \in [n]}f_i$. Therefore, by (2) of Lemma \ref{saturated modules}, $\bigcap_{i \in [n]} K_i$ is a saturated module.

Now we show the second requirement. Applying $\boldsymbol{\nu}$ we get an injection $\boldsymbol{\nu} \bigcap_{i \in [n]} K_i \to \boldsymbol{\nu} M(n)$. Note that $\boldsymbol{\nu} M(n) = D (e_n k\C)$, where $D = \Hom_k(-, k)$, is an indecomposable finite dimensional injective module.  By the structure of $\C$, $\boldsymbol{\nu} M(n)$ has a simple top on $[0]$ and a simple socle on $[n]$. Therefore, the second requirement is satisfied if the following conditions hold:
\begin{align}
\dim_k (\boldsymbol{\nu} \bigcap_{i \in [n]} K_i)_n & = \dim_k \Hom_{k\C} (\bigcap_{i \in [n]} K_i, M(n)) =1,\\
\dim_k (\boldsymbol{\nu} \bigcap_{i \in [n]} K_i)_{n-1} & = \dim_k \Hom_{k\C} (\bigcap_{i \in [n]} K_i, M(n-1)) = 0,
\end{align}
which are established in the next lemma.
\end{proof}

\begin{lemma}
Notations as above. Then (4.1) and (4.2) hold.
\end{lemma}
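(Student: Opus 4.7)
My plan is to deduce both equalities in parallel by exploiting the injectivity of $M(n)$ and $M(n-1)$ in $\C \module$ (Proposition \ref{injectives}). Write $N := \bigcap_{i \in [n]} K_i$. Applying $\Hom_{k\C}(-, M(s))$ for $s \in \{n, n-1\}$ to the short exact sequence
\[
0 \to N \to M(n) \to M(n)/N \to 0,
\]
and using $\Ext^1_{k\C}(M(n)/N, M(s)) = 0$, yields a surjective restriction map
\[
\mathrm{res}_s : \Hom_{k\C}(M(n), M(s)) \twoheadrightarrow \Hom_{k\C}(N, M(s)).
\]
The Yoneda-type isomorphism $\Hom_{k\C}(M(n), V) \cong V_n$, $\phi \mapsto \phi(e_n)$, identifies $\Hom_{k\C}(M(n), M(s))$ with $M(s)_n = k\C([s], [n])$, sending $\alpha \in k\C([s], [n])$ to the homomorphism $\beta \mapsto \beta \alpha$.

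For (4.2), the basis $\{\alpha_{n-1, i}\}_{i \in [n]}$ of $M(n-1)_n = k\C([n-1], [n])$ corresponds precisely to the homomorphisms $\{f_i\}_{i \in [n]}$ defined at the start of this subsection, and each $f_i$ restricts to zero on $N$ since $N \subseteq K_i = \ker f_i$ by construction. Hence $\mathrm{res}_{n-1} = 0$, giving $\Hom_{k\C}(N, M(n-1)) = 0$. For (4.1), $M(n)_n = k \cdot e_n$ is one-dimensional and corresponds to $\ID_{M(n)}$, so the image of $\mathrm{res}_n$ is spanned by the inclusion $\iota : N \hookrightarrow M(n)$. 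This gives $\dim_k \Hom_{k\C}(N, M(n)) \leqslant 1$, with equality if and only if $N \neq 0$.

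The one genuinely non-formal step, and therefore the main obstacle, is verifying $N \neq 0$. I would handle this by a Hilbert-series comparison: $N$ is by construction the kernel of
\[
F := \bigoplus_{i \in [n]} f_i \,:\, M(n) \longrightarrow \bigoplus_{i \in [n]} M(n-1),
\]
and in degree $r$ the source has dimension $\binom{r}{n}$ while the target has dimension $n \binom{r}{n-1}$. Their ratio equals $(r - n + 1)/n^2$, which exceeds $1$ once $r \geqslant n^2 + n$; for such $r$ the linear map $F_r$ cannot be injective, so $N_r \neq 0$ and the argument is complete.
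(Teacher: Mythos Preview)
Your proof is correct and follows the same structural framework as the paper: both arguments use injectivity of $M(s)$ to obtain the surjection $\Hom_{k\C}(M(n), M(s)) \twoheadrightarrow \Hom_{k\C}(N, M(s))$, and both exploit the Yoneda identification $\Hom_{k\C}(M(n), M(s)) \cong M(s)_n$. For (4.2) the paper instead shows $\dim_k \Hom_{k\C}(C, M(n-1)) = n$ by exhibiting $n$ independent maps factoring through $C$; your observation that the basis $\{\alpha_{n-1,i}\}$ corresponds exactly to the $f_i$, each of which vanishes on $N$ by definition of $K_i$, is the same content expressed more directly. The one substantive difference is your handling of the lower bound in (4.1): the paper simply asserts that $\dim_k \Hom_{k\C}(N, M(n)) \geqslant 1$ is ``clear'', implicitly assuming $N \neq 0$, whereas you supply an explicit Hilbert-series argument comparing $\binom{r}{n}$ against $n\binom{r}{n-1}$ to force $N_r \neq 0$ for $r \geqslant n^2 + n$. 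Your version is therefore more self-contained on this point.
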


\begin{proof}
It is clear that $\dim_k \Hom_{k\C} (\bigcap_{i \in [n]} K_i, M(n)) \geqslant 1$. On the other hand, the short exact sequence
\[
0 \to \bigcap_{i \in [n]} K_i \to M(n) \to C \to 0
\]
where $C$ is the quotient module gives us another short exact sequence
\[
0 \to \Hom_{k\C}(\bigcap_{i \in [n]} K_i, M(n)) \to \Hom_{k\C}(M(n), M(n)) \to \Hom_{k\C}(C, M(n)) \to 0.
\]
Since the middle term has dimension 1, the dimension of the first term is at most one. This proves (4.1).

Similarly, to prove (4.2), it suffices to show that
\[
\Hom_{k\C} (C, M(n-1)) \cong \Hom_{k\C}(M(n), M(n-1)) \cong k^{\oplus n}.
\]
But this is also clear. Indeed, one can compose the inclusion map $C \to \bigoplus_{i \in [n]} k\C \alpha_{n-1, i}$ with the $n$ projection maps $\bigoplus_{i \in [n]} k\C \alpha_{n-1, i} \to k\C \alpha_{n-1, i}$ to get $n$ homomorphisms, and they are distinct because the $n$ distinct maps $f_i: M(n) \to k\C \alpha_{n-1, i}$ factor through them. Clearly, these homomorphisms span an $n$-dimensional space.
\end{proof}

We describe explicitly the simple saturated $\C$-modules for $n \in \{0, \, 1, \, 2\}$.

\begin{example} \normalfont
The simple saturated module $L^0$ corresponded to $n = 0$ is precisely $M(0)$. For $n = 1$, the simple saturated module $L^1$ is generated by $\alpha_{1, 1} - \alpha_{1, 2}$. More explicitly, for $i \geqslant 2$, its values on $[i]$ is spanned by elements of the form $\alpha - \beta$, where $\alpha, \beta \in \C([1], [i])$. For $n=2$, $L^2 = \langle \alpha_{2, 1} - \alpha_{2, 2} \rangle \cap \langle \alpha_{2, 2} - \alpha_{2, 3} \rangle$. By an explicit computation, we find that $L^2$ is generated by the element $(2,4) - (2, 3) + (1,3) - (1, 4) \in k\C([2], [4])$, where $(i_1, i_2)$ represents the function $[2] \to [4]$ sending 1 and 2 to $i_1$ and $i_2$ respectively.
\end{example}

\end{document}